\documentclass[11pt,a4paper]{article}
\usepackage[utf8]{inputenc}
\usepackage{amsmath,amssymb,amsthm,color,graphicx,fancybox}
\usepackage{tcolorbox}
\usepackage{bclogo}
\usepackage{mdframed}
\usepackage{wasysym}
\usepackage[english]{babel}
\usepackage{subfigure}
\usepackage{subfigmat}
\usepackage{float}

\newcommand{\R}{\mathbb R}

\newcommand{\eps}{\epsilon}

\newtheorem{theorem}{Theorem}[section]
\newtheorem{lemma}{Lemma}[section]

\newtheorem{cor}{Corollary}[section]

\newtheorem{remark}{Remark}[section]
\newtheorem{defi}{Definition}[section]
\newtheorem{acknowledgment*}{Acknowledgment}

\newcommand{\be}{\begin{equation}}
\newcommand{\ee}{\end{equation}}

\title{\Large \textbf{Happy family of stable marriages}}

\author{\textsc{Gershon Wolansky}\footnote{Department of Mathematics, Technion, Israel Inst. of Technology}}

\begin{document}\huge
\begin{center}{\bf Self-perimeters of convex sets}\end{center}
\normalsize
\begin{center}
 Gershon Wolansky, \\ \ Technion, Israel Inst. of Technology, \ Haifa, Israel
\end{center}

\begin{abstract}
 This paper introduces a natural definition for the volume of the unit ball in $n$-dimensional normed spaces $\mathbb{R}^n$. This definition preserves the Euclidean relation $P(B)/V(B)=n$ between the perimeter and the volume of the unit ball $B$ in $\mathbb{R}^n$. We show that this volume definition is invariant under origin-preserving affine transformations. For $n=2$, we derive an explicit integral formula for the self-perimeter of the unit ball and extend it to non-centrally symmetric sets. The construction is extended to $\mathbb{R}^n$ via a recursive integration over the boundary, utilizing $(n-1)$-dimensional volumes of planar intersections, and we discuss its invariance under affine transformations. Finally, we pose and discuss an Alexandrov-type problem for the associated surface measure, providing perturbative solutions in the 2D case. In particular, we prove that, generically, any perturbation of the surface measure of the Euclidean 2D disk yields a 4-fold symmetric convex set in the leading order.
\end{abstract}

\section{Introduction}

The concept of volume and surface area in finite-dimensional normed spaces (Minkowski spaces) lacks a single, universally accepted definition, unlike in standard Euclidean geometry. Over the years, various approaches have been proposed, each tailored to preserve specific geometric, analytic, or integral properties of the space \cite{alvarez1998volumes, thompson1996minkowski}. Among the most prominent classical frameworks are the Busemann and the Holmes-Thompson definitions. The Busemann definition \cite{busemann1950geometry} normalizes the measure such that the Minkowski unit ball has the same volume as the Euclidean unit ball. The corresponding affine invariant surface area is then obtained by integrating the Euclidean surface element, normalized by the volume of the central cross-section of the unit ball parallel to the tangent plane. Alternatively, the Holmes-Thompson definition \cite{holmes1979n}, originating from symplectic geometry, measures the volume of a set by multiplying its Lebesgue measure with the Lebesgue measure of the polar body in the dual space, normalized by the Euclidean unit ball volume. This approach inherently satisfies the Blaschke-Santaló inequalities \cite{schneider2014convex} and is completely coordinate-independent. However, these classical definitions do not inherently prioritize the preservation of the fundamental Euclidean relationship between the perimeter (or surface area) $P_n$ and the volume $V_n$ of the unit ball, namely $P_n = n V_n$. In this paper, we attempt to find a natural definition of volume and perimeter in normed spaces $\mathbb{R}^n$ based precisely on the preservation of this Euclidean ratio $P_B(B)/V(B)=n$ for the unit ball $B \subset \mathbb{R}^n$. To achieve this, we introduce an intrinsic, recursive approach, starting with $n=2$ and using the definition of \textit{self-perimeter}.

The concept of self-perimeter is a central object of study in asymmetric Minkowski geometry and the theory of convex distance functions. A fundamental extremal problem in this field involves bounding the extremal self-perimeter of convex bodies in dimension two. The historical starting point is Golab's theorem \cite{golab1932}, which established that for centrally symmetric convex bodies, the self-perimeter is bounded between six and eight (with the maximum attained by a parallelogram, the minimum by the perfect hexagon, while the Euclidean $2\pi$ lies in between). Extending this problem to the space of asymmetric convex bodies expands the upper bound. The precise result states that the minimum self-perimeter of any planar convex body (calculated with respect to the optimal interior reference point) is bounded between six and nine. The lower bound of six is attained if and only if the body is an affinely regular hexagon. The absolute maximum of nine is attained if and only if the convex body is a triangle, a property formally proven by Grünbaum \cite{grunbaum1966}. These results are further supported by the strict convexity of the self-perimeter with respect to the location of the interior reference point \cite{makeev2003}. For regular $k$-gons, as well as for the family of planar convex bodies possessing $2\pi/k$ rotational symmetry, the self-perimeter exhibits periodic behavior depending on the remainder of $k$ modulo four \cite{ghandehari2019, shcherba2007}. The literature demonstrates that when $k \equiv 0 \pmod 4$, the regular polygon constitutes a maximum (upper bound) for the family of shapes with the same degree of symmetry. Conversely, when $k \equiv 2 \pmod 4$, the regular polygon serves as a minimum \cite{martini2001}. This modular partition dictates the nature of the convergence of the self-perimeter to the limit of $2\pi$ (the value for a circle).

\subsection*{Extension to higher dimensions}
In this paper, we introduce a twist on Busemann's definition \cite{busemann1950geometry} of an affine invariant perimeter of a convex set $K$ containing the origin in its interior:
\be\label{Bus}P_{Bus}(K)=\int_{\partial K} \sigma^{(n-1)}_K(\nu_x) d{\cal H}_{n-1}(x) \ee
where $\nu_x$ is the unit normal to $\partial K$ at $x$.
\be\label{sigmaK}\sigma_K^{(n-1)}(\nu_x)\equiv \frac{\omega^{(n-1)}}{ {\cal H}_{n-1} (K\cap \Sigma^{n-1}(\nu_x))}\ee
$\Sigma^{n-1}(\nu_x)$ is the $(n-1)$-hyperplane perpendicular to the normal $\nu_x\in\partial K$ and $$\omega^{(k)}=
\frac{\pi^{k/2}}{\Gamma \left(\frac{k}{2}+1\right)}$$
is the volume of the unit $k$-ball in Euclidean space. Considering a centrally symmetric convex set $B\in \R^n$, we are looking for a sensible definition of a perimeter of a set $K\subset \R^n$ corresponding to the norm $\|\cdot\|_B$ induced by $B$ as a unit ball. Our definition is inspired by the affine invariant definition of Busemann (\ref{Bus}) in dimension $2$. In fact, if we consider $K$ to be the unit ball $B$ in $\R^2$, then equation (\ref{Bus}) is the actual perimeter of $B$ in the metric $\|\cdot\|_B$ (see Sec. \ref{sec3}), called \textit{the self-perimeter} of $B$.

In our approach, we replace $\omega^{(n-1)}$ in  (\ref{sigmaK}) by a recursive definition of $\omega^{(n-1)}_B(\nu)$, which is the {\em self-volume} of the unit $n-1$ dimensional ball  $B\cap \Sigma^{n-1}(\nu)$ {\em congruent with the induced norm $\|\cdot\|_{B\cap \Sigma^{n-1}(\nu)}$ on $\Sigma^{n-1}(\nu)$. } 
 See Definition \ref{def4.1} below. 

Thus, the perimeter of a set $K\subset\R^n$ consistent with the norm $\|\cdot\|_B$ is:
$$P_B(K)=\int_{\partial K} \frac{\omega^{(n-1)}_B(\nu_x)}{ {\cal H}_{n-1} ( B\cap \Sigma^{n-1}(\nu_x))} d{\cal H}_{n-1}(x) \ , $$
while the self perimeter  $P(B)$ of $B\subset \R^n$ itself  and its self-volume $\omega^{(n)}_B$  are
$$P(B)=\int_{\partial B} \frac{\omega^{(n-1)}_B(\nu_x)}{ {\cal H}_{n-1} ( B\cap \Sigma^{n-1}(\nu_x))} d{\cal H}_{n-1}(x) =n\omega^{(n)}_B \ . $$
This definition coincides with Busemann's definition for $n=2$ since $\omega^{(1)}=2$ is independent of any direction $\nu\in\R^2$. However, it replaces Busemann's preservation of the Euclidean volume of the unit ball with the preservation of the Euclidean ratio $P(B)/\omega_B^{(n)}=n$ of perimeter/volume of the unit ball in $n$ dimensions (Definition \ref{def4.1}).


As we will show, the exact cancellation of the Jacobian in this formulation ensures that this definition of volume is strictly preserved under origin-preserving affine transformations. We extend the definition of $\omega_B^{(n)}$ to non-centrally symmetric convex bodies $B$, calculate explicitly the self-volume/perimeter of the  $n$-simplex $\Delta^n$, and prove the  strict convexity
of   $p\mapsto \omega^{(n)}_{B-p}$. We pose the following:
\vskip .2in\noindent 
{\bf Conjecture 1.} 
{\it The function $p\in Int(B) \mapsto \omega^{(n)}_{B-p}$ is strictly convex for any any convex set $B\subset\R^n$. In particular there exists a unique minimizer $p^*\in int(B)$.  
}

Conjecture 1 is valid for any convex $B\subset \R^2$ (cf. Theorem \ref{th3.4} below).  In Section \ref{sec4.3}
  we show that this conjecture is valid  of any convex body in $\R^n$ composed of Cartesian products of simplices, 1D segments, and 2D convex sets.  At the end of Section \ref{sec40} we pose the following  (see Section \ref{subsec4.3}):

\vskip .2in\noindent 
{\bf Conjecture 2.} 
{\it 
\begin{itemize}
    \item The CCS (convex centrally symmetric) body in $\R^n$ of maximal self-volume is the $n$-cube $B=[-1,1]^n$,  $\omega^{(n)}_B=2^n$.
    \item For any convex set $B\subset\R^n$ containing the origin, $\omega^{(n)}_B\leq \frac{(n+1)^n}{n!} = \omega^{(n)}_{\Delta^n}$, where $\Delta^n$ is the $n-$simplex and $0\in\Delta^n$ is its centeroid. 
    \item If $n=2k$, then the CCS body of minimal self-volume is the Cartesian product of $k$ hexagons ($\omega^{(n)}_B=3^k$). If $n=2k+1$, it is the Cartesian product of $k$ hexagons and a 1D interval ($\omega^{(n)}_B=2\times 3^k$).
\end{itemize}}

\subsection*{Alexandrov-type Problem for Self-Surface Measure}

Beyond the foundational task of defining volume and surface area, this framework allows us to pose a novel inverse problem. In classical convex geometry, the Minkowski and Alexandrov problems seek to reconstruct a convex body from its standard Euclidean surface area measure or integral Gauss curvature, respectively \cite{schneider2014convex}. Within the context of our intrinsic definitions, we introduce an Alexandrov-type problem formulated with respect to the self-surface measure of a centrally symmetric convex set \cite{alvarez1998volumes, thompson1996minkowski}. Given that our perimeter and volume are defined recursively by integrating over the essential boundary—normalized by the self-volume of $(n-1)$-dimensional cross-sections—the natural inverse question arises: Given a positive measure $\mu$ on the Euclidean sphere $S^{n-1}$, does there exist a centrally symmetric convex set $B \subset \mathbb{R}^n$ whose self-surface measure is exactly $\mu$?

In Section \ref{sec6} we explore this problem in the two-dimensional case, providing perturbative solutions and identifying necessary conditions for its solvability around the Euclidean disc. In particular, we obtain that an $\mathcal{O}(\eps)$ perturbation of the uniform measure on the unit Euclidean circle yields an $\mathcal{O}(\eps^{1/2})$ perturbation of this circle, which depends \textit{only on the log 4-fold symmetric} part of the measure (c.f., Theorem \ref{th6.1}).

\section{Fundamental definitions}
Let $B\subset \R^n$ be a convex, centrally symmetric set (CCS). The norm $\|\cdot\|_B$ is defined in the usual way as:
$$ \|v\|_B=\sup\{ t>0; \ \ v/t\not\in B\}\ \ . $$
Note that $B$ is the unit ball in $\R^n$ with respect to $\|\cdot\|_B$.
\begin{defi}\label{def1}
 Let $K$ be a starlike set in $\R^n$ with respect to the origin. For $\theta\in S^{(n-1)}$ set:
 $$ r_K(\theta)= \sup \{t>0; t\theta\in K\} \ $$
 the \textit{radius function} of $K$.
\end{defi}
If $K$ is centrally symmetric (CCS), then $r_K(\theta)=r_K(-\theta)$.
\begin{defi} 
 The \textit{support function} $h_B(x):= \sup_{z\in B}\langle x, z \rangle$. The polar set $B^*:=\{x; h_B(x)\leq 1 \}$. If $B$ is CCS, so is $B^*$.
\end{defi}
\begin{lemma}[Schneider \cite{schneider2014convex}]
 For $\theta\in S^{n-1}$:
 $$ h_B(\theta) = \frac{1}{r_{B^*}(\theta)} \ \ ; \ \ r_B(\theta)= \frac{1}{h_{B^*}(\theta)} \ . $$
\end{lemma}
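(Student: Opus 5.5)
The plan is to prove both identities directly from the definitions of the support function, the polar set and the radius function. The standing assumption is that $B$ is a convex body containing the origin in its interior, which holds since $B$ is CCS with nonempty interior. Under this assumption $h_B(\theta)>0$ and $r_B(\theta)\in(0,\infty)$ for every $\theta\in S^{n-1}$, and likewise for $B^*$. The first identity is essentially immediate. The second follows from the first once we know the bipolar relation $B^{**}=B$.

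\emph{First identity.} Fix $\theta\in S^{n-1}$. For $t>0$ we have $h_B(t\theta)=t\,h_B(\theta)$ by positive homogeneity of the support function. Hence
\[
t\theta\in B^* \iff t\,h_B(\theta)\le 1 \iff t\le 1/h_B(\theta),
\]
so $r_{B^*}(\theta)=\sup\{t>0:\ t\theta\in B^*\}=1/h_B(\theta)$. Note that $h_B(\theta)>0$ because $B$ contains a small ball around $0$; this is needed to make sense of the division. Incidentally, the same computation shows that $B^*$ is bounded and contains $0$ in its interior, since $h_B$ is bounded above and below on the sphere.

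\emph{Second identity.} Apply the first identity with $B^*$ in place of $B$, which is legitimate by the remark just made. This gives $r_{B^{**}}(\theta)=1/h_{B^*}(\theta)$, so it remains to show $B^{**}=B$. The inclusion $B\subset B^{**}$ is trivial: for $z\in B$ and $x\in B^*$ we have $\langle x,z\rangle\le h_B(x)\le 1$, hence $h_{B^*}(z)\le 1$.

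For the reverse inclusion, take $y\notin B$. Since $B$ is closed and convex, the Hahn--Banach (strict separation) theorem gives a vector $x$ with $\langle x,y\rangle>h_B(x)$. Because $0\in \mathrm{int}\,B$, we have $h_B(x)>0$, so we may rescale $x$ so that $h_B(x)=1$. Then $x\in B^*$ and $\langle x,y\rangle>1$, which means $h_{B^*}(y)>1$ and therefore $y\notin B^{**}$.

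This separation step is the only nontrivial ingredient, and the main thing to check is that the normalization $h_B(x)=1$ is allowed; that is exactly where $0\in\mathrm{int}\,B$ is used. Finally, central symmetry of $B^*$ follows from $h_B(-x)=h_B(x)$ when $B=-B$, although this symmetry is not needed for the two identities themselves.
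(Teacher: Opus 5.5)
Your argument is correct. Positive homogeneity of $h_B$ gives $r_{B^*}=1/h_B$ directly, and the second identity follows from applying this to $B^*$ together with the bipolar relation $B^{**}=B$, where the separation step and the normalization $h_B(x)=1$ are properly justified by $0\in\mathrm{int}\,B$. The paper gives no proof of its own; it simply quotes the lemma from Schneider, where it is established by this same standard route. If you do not want to assume $B$ closed, note that $B^{**}=\overline{B}$ and $r_B=r_{\overline{B}}$ for convex $B$ with $0$ in its interior, so the conclusion is unchanged.
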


\section{Self-perimeter in the 2-plane}\label{sec3}
If $n=2$, there is a natural notion of a perimeter $P_B$ of $B$ as measured with respect to this norm. Indeed, we can use the norm $\|\cdot\|_B$ as a natural definition of length of an arc.

In the case $n=2$, we can introduce an explicit formulation for the perimeter:
\be\label{per2} P(B)= \int_{\partial B} \frac{ds}{r_B(\vec{\tau}(s))} \equiv \int_{\partial B}h_{B^*}\left(\vec{\tau}(s)\right)ds \ee
where $ds$ is the Euclidean length of the boundary of $B$, and $\vec{\tau}(s)$ is the tangent vector to the boundary at $s\in\partial B$. An explicit expression in terms of an integral on the Euclidean circle $\mathbb{S}^1$ is:

\begin{lemma}
 The self-perimeter of a convex $B\ni\{0\}$ in $\R^2$ is:
 \be\label{1}P(B)=\int_0^{2\pi} \frac{\sqrt{r^2_B(\phi)+ (r^{'}_B(\phi))^2}d\phi}{r_B\left( \phi + \arctan\left(\frac{r_B(\phi)}{r_B^{'}(\phi)}\right)\right)}\ee
 where $r_B(\phi):= r_B(\cos(\phi), \sin(\phi))$.
\end{lemma}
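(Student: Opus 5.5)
We start from the definition (\ref{per2}): the self-perimeter is the $\|\cdot\|_B$-length of $\partial B$, that is $P(B)=\int_{\partial B}\|\vec\tau(s)\|_B\,ds$. Here $\vec\tau(s)$ is the Euclidean unit tangent. For a unit vector $\theta$ we have $\|\theta\|_B=1/r_B(\theta)$, directly from Definition \ref{def1} and the definition of the gauge. (The identity $1/r_B=h_{B^*}$ is the earlier lemma of Schneider; it is not needed here.) The plan is therefore to parametrize $\partial B$ in polar coordinates and compute the two ingredients, namely the Euclidean arclength element and the polar angle of the tangent direction.

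First, write $\partial B$ as $x(\phi)=r_B(\phi)(\cos\phi,\sin\phi)$ for $\phi\in[0,2\pi)$. This is legitimate because $0$ is interior and $B$ is convex, so $B$ is starlike and $r_B$ is Lipschitz, hence differentiable a.e. Then
$$x'(\phi)=r_B'(\phi)(\cos\phi,\sin\phi)+r_B(\phi)(-\sin\phi,\cos\phi),$$
so that $ds=\sqrt{r_B^2+(r_B')^2}\,d\phi$. This gives the numerator of (\ref{1}).

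Second, identify the direction of $\vec\tau=x'/|x'|$. In the rotating orthonormal frame $(e_r,e_\phi)$ the tangent has components $(r_B',r_B)$. Its angle relative to $e_r$ is $\alpha$ with $\tan\alpha=r_B/r_B'$, and $\alpha\in(0,\pi)$ since $r_B>0$ means the tangent always has a positive $e_\phi$-component. Hence the absolute polar angle of $\vec\tau$ is $\phi+\alpha$. Substituting into $\|\vec\tau\|_B=1/r_B(\phi+\alpha)$ gives (\ref{1}).

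The main obstacle is the branch of $\arctan$. We need $\alpha\in(0,\pi)$, while the principal value $\arctan(r_B/r_B')$ lies in $(-\pi/2,\pi/2)$.

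If $B$ is centrally symmetric, this is harmless. When $r_B'<0$ the principal value is $\alpha-\pi$, so the formula evaluates $r_B$ at the antipodal direction. But $r_B(\theta)=r_B(-\theta)$, so the value is unchanged. The case $r_B'=0$ is read as $\alpha=\pi/2$ by continuity.

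For a general convex $B\ni 0$, the arctan in (\ref{1}) must be interpreted as the argument in $(0,\pi)$, i.e.\ $\operatorname{atan2}(r_B,r_B')$. I would state this explicitly.

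Finally, on the null set where $r_B$ is not differentiable (corners of $\partial B$), the integrand is undefined but the set has measure zero. Absolute continuity of $x(\phi)$ makes the length integral valid, which completes the argument.
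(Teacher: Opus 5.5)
Your proof is correct and follows the same route as the paper's: parametrize $\partial B$ by $x(\phi)=r_B(\phi)(\cos\phi,\sin\phi)$, read off $ds=\sqrt{r_B^2+(r_B')^2}\,d\phi$, and identify the direction of $x'(\phi)$ as $\phi+\arctan(r_B/r_B')$. Your treatment of the $\arctan$ branch is a genuine refinement. The paper only matches slopes, which fixes the tangent direction only modulo $\pi$, so for non-centrally symmetric $B$ (with the counter-clockwise convention of Remark~\ref{rem3.1}) one does need the $(0,\pi)$-valued branch you specify.
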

\begin{proof}
 The vector $\vec{r}= (r\cos(\phi), r\sin(\phi))$. Its tangent is $d\vec{r}/d\phi= (r^{'}\cos(\phi)-r\sin(\phi), r^{'}\sin(\phi) + r\cos(\phi))$. The slope of this direction is:
 $$\frac{\tan(\phi) + r/r^{'}}{1-(r/ r^{'})\tan(\phi)}= \tan\left( \phi+ \arctan(r/r^{'})\right) \ . $$
\end{proof}

\begin{theorem}[\cite{golab1932, martini2001}]
 The self-perimeter of CCS is always between 4 and 6. 
\end{theorem}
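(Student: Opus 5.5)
The plan is to prove the two inequalities in the statement separately, both through formula (\ref{per2}), $P(B)=\int_{\partial B} ds/r_B(\vec\tau(s))$. The first step is to reduce to polygons. A CCS body $B$ is a Hausdorff limit of centrally symmetric polygons $B_k$, and the integrand in (\ref{per2}) depends continuously on the body. Together with the lower semicontinuity of length this should give $P(B_k)\to P(B)$, so it is enough to prove both bounds for centrally symmetric $2m$-gons. For such a polygon with edge vectors $\pm e_1,\dots,\pm e_m$, formula (\ref{per2}) reduces to
\[
P(B)=2\sum_{i=1}^m \|e_i\|_B .
\]
Each $\|e_i\|_B$ is the ratio of $|e_i|$ to the Euclidean radius of $B$ in the direction of $e_i$. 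By the invariance noted in the introduction, $P$ is unchanged under linear maps, so we may normalise $B$ by an affine transformation whenever that helps.

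For the lower bound, I would inscribe in $B$ the affinely regular hexagon $H$ spanned by $\pm a,\pm b,\pm(b-a)$ with $a,b\in\partial B$. I would choose $a,b$ so that $\|b-a\|_B=1$, which is possible by continuity as $b$ moves along $\partial B$ from $a$ to $-a$. Then $\partial B$ splits into six arcs, and each arc joins two consecutive vertices of $H$ that are at $B$-distance $1$ from each other. Since the $B$-length of an arc is at least the $B$-norm of its chord, each arc contributes at least $1$. Summing the six arcs gives a lower bound for $P(B)$. The computation is sharp exactly when every arc is a straight segment, that is, when $B$ is an affinely regular hexagon.

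For the upper bound, I would normalise by taking a parallelogram $Q\supseteq B$ of minimal area. By the standard argument, the midpoints of the sides of $Q$ then lie on $\partial B$. After a linear map we may take $Q=[-1,1]^2$, so that $\pm e_1,\pm e_2\in\partial B$. Each quarter of $\partial B$ is a convex arc joining two of the points $\pm e_j$, and I would compare its $B$-length with the $B$-length of the corresponding L-shaped path along $\partial Q$. This comparison uses monotonicity of length under convex containment, applied in the $B$-norm, together with the bound $\|x\|_B\le\|x\|_{\mathrm{cross}}$ that holds because the cross-polytope $\mathrm{conv}(\pm e_1,\pm e_2)\subseteq B$. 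Summing the four quarters gives the upper bound, with equality for the parallelogram.

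The main obstacle is matching the numerical constants. Evaluating $2\sum\|e_i\|_B$ directly gives $6$ for the regular hexagon and $8$ for the square, and these are exactly the equality cases of the two arguments above. The endpoints $4$ and $6$ in the statement therefore cannot be reached under the normalisation in (\ref{per2}) as written. So the first thing I would do is fix which normalisation of $P$ the statement intends. The two chord estimates above are independent of that choice, and only the final constants depend on it. If no consistent normalisation produces the endpoints $4$ and $6$, then the argument establishes the bounds at the values the two chord estimates actually yield, and the stated constants need correcting.
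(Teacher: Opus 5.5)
Your diagnosis is correct, and you can state it without hedging: the statement as printed is false, not just mis-normalised. Under (\ref{per2}) the square $[-1,1]^2$ has self-perimeter $8$ and the Euclidean disc has $2\pi>6$. No rescaling of $P$ can carry $[6,8]$ onto $[4,6]$, because the endpoint ratios are $4/3$ and $3/2$; the paper's own $\omega^{(2)}_B=P/2$ lies in $[3,4]$. The paper gives no proof here, only the citation of Golab and Martini. Its Introduction and the Corollary immediately following both use the constants $6$ and $8$, so ``4 and 6'' is a misprint. What you actually prove is Golab's theorem $6\le P(B)\le 8$. Your route is the classical one: an inscribed affinely regular hexagon for the lower bound and a normalised circumscribed parallelogram for the upper bound. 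The argument is sound.

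Some points to tighten:

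\begin{itemize}
\item \textbf{Reduction to polygons.} It is unnecessary, since both arguments apply verbatim to any CCS body. As justified it is also incomplete: lower semicontinuity only gives $\liminf_k P(B_k)\ge P(B)$. You would need continuity of Minkowski perimeter of convex curves under Hausdorff convergence, plus uniform convergence of $\|\cdot\|_{B_k}$. Simply drop this step.

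\item \textbf{Cyclic order of the hexagon.} Record why the six points occur in the order $a,b,b-a,-a,-b,a-b$. If $b$ lies on the open arc from $a$ to $-a$, then $b-a=b+(-a)$ lies strictly between $b$ and $-a$ in angle. Only then are the six chords $b-a,-a,-b,a-b,a,b$, each of $B$-norm $1$.

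\item \textbf{Equality case.} Your claim is too quick. In a non-strictly convex norm an arc can have $B$-length equal to the $B$-norm of its chord without being straight. For example, in $\ell^\infty$ a zigzag from $(0,0)$ to $(1,0)$ with slopes $\pm\frac12$ has length $1$. So uniqueness of the hexagon as minimiser needs a separate argument, though the statement does not ask for it.

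\item \textbf{Upper bound.} Either of your two tools suffices on its own.
  \begin{itemize}
  \item Monotonicity of Minkowski perimeter under inclusion, applied to the cap $B\cap\{x_1+x_2\ge 1\}\subseteq \mathrm{conv}(e_1,e_1+e_2,e_2)$, gives each quarter arc length at most $\|e_2\|_B+\|e_1\|_B=2$ after cancelling the common chord $[e_1,e_2]$.
  \item Alternatively, the lines $x_1=1$ and $x_2=1$ support $B$ at $e_1$ and $e_2$. Hence the quarter arc is monotone in both coordinates and its $\ell^1$-length is exactly $2$. Then $\|x\|_B\le |x_1|+|x_2|$ finishes.
  \end{itemize}

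\item \textbf{Normalisation.} It follows most directly from an Auerbach basis, i.e.\ $e_1,e_2\in B$ maximising $|\det(e_1,e_2)|$.
\end{itemize}
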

\begin{cor}
 If $r_B(\phi)$ is the radius function of a CCS in $\R^2$, then (\ref{1}) takes values between 6 and 8. 
\end{cor}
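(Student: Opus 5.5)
The plan is to identify the integral (\ref{1}) with the Minkowski length of $\partial B$ in the norm $\|\cdot\|_B$, and then apply Golab's theorem in its classical normalization.

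First, I will check that for a CCS body $B$ and any nonzero $v\in\R^2$ we have $\|v\|_B=|v|/r_B(v/|v|)$. This follows directly from the definitions of $\|\cdot\|_B$ and of the radius function (Definition \ref{def1}). Consequently, if $\partial B$ is parametrized by Euclidean arclength $s$ with unit tangent $\vec\tau(s)$, then $\|d\vec r\|_B = ds/r_B(\vec\tau(s))$. Hence (\ref{per2}) is exactly the length of the closed curve $\partial B$ measured in the metric $\|\cdot\|_B$.

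Next, I will rewrite this integral in the polar parametrization $\vec r(\phi)=r_B(\phi)(\cos\phi,\sin\phi)$ and use the computation in the proof of the preceding lemma. The Euclidean element is $ds=\sqrt{r_B^2+(r_B')^2}\,d\phi$, and the tangent direction has angle $\phi+\arctan(r_B/r_B')$. This angle is only determined modulo $\pi$, because of the branch of $\arctan$ and the two possible orientations of the tangent. Since $B$ is CCS, $r_B(\theta)=r_B(-\theta)$, so the integrand of (\ref{1}) is well defined and equals $ds/r_B(\vec\tau)$. Where $\partial B$ is not $C^1$, or where $r_B'=0$, I will argue either by the convention $\arctan(\pm\infty)=\pm\pi/2$ or by approximating $B$ with smooth CCS bodies. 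Minkowski length is continuous under Hausdorff convergence of convex curves when the norm is fixed, and it also varies continuously with the norm, so the approximation passes to the limit. Thus (\ref{1}) equals the self-perimeter $P(B)$.

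Finally, I invoke Golab's theorem. To fix the normalization, I compare with the extremal cases. For the square $[-1,1]^2$, each side has Euclidean length $2$ and tangent direction with $r_B=1$, so (\ref{1}) gives $8$. For the affinely regular hexagon, each side is a translate of a radius vector, so each side has norm $1$ and (\ref{1}) gives $6$. The disc gives $2\pi$. The quantity bounded in the cited theorem is therefore the full self-perimeter in the normalization $[6,8]$; the stated range ``4 and 6'' must be read in this normalization. With that reading, the bounds $6\le(\ref{1})\le 8$ follow, with equality exactly at affine hexagons and parallelograms.

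The main obstacle I expect is this normalization discrepancy and the branch and regularity issues in (\ref{1}). The geometric content is already carried by Golab's theorem, so the work is mainly making the identification of (\ref{1}) with $P(B)$ rigorous.
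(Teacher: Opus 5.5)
Your proposal is correct and follows the paper's implicit route: by the preceding lemma, (\ref{1}) is the $\|\cdot\|_B$-length of $\partial B$, i.e.\ the self-perimeter $P(B)$, and Golab's theorem then gives the bounds. Your normalization check (square $8$, affine hexagon $6$, disc $2\pi$) correctly shows that the quoted range ``between 4 and 6'' is a misprint for Golab's $6\le P(B)\le 8$; also, no smoothing is needed for the regularity issue, since $r_B$ is Lipschitz and (\ref{1}) is simply $\int_0^{2\pi}\|\vec r\,'(\phi)\|_B\,d\phi$ with the integrand defined almost everywhere.
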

Note that for $B\in CCS(\R^2)$, $r_B$ is a $\pi$-periodic function. 

The definition (\ref{1}) can easily be extended to convex, non-centrally symmetric sets. Let $K$ be such a set and $p\in Int(K)$.
\begin{remark}\label{rem3.1}
 If $B$ is convex but not centrally symmetric, the perimeter $P$ may depend on the orientation of the integral on the boundary. We agree on the positive (counter-clockwise) orientation for this definition. 
\end{remark} 
\begin{remark}
 In addition, there is no upper limit to the self-perimeter for non-centrally symmetric sets. Indeed, if $p\in B$ is very close to $\partial B$, then evidently the self-perimeter of $B-\{p\}$ can be very large.
\end{remark} 

The following result was established in \cite{makeev2003}.
\begin{theorem}[Makeev \cite{makeev2003}]\label{thcon2D}
 Let $K \subset \mathbb{R}^2$ be a bounded convex set and $p\in Int(K)$. Then $p\rightarrow P_B(K-p)$ is strongly convex in $Int(K)$. In particular, there exists a unique interior point $p \in \text{int}(K)$ that minimizes the self-perimeter $P_B(K-\{p\})$.
\end{theorem}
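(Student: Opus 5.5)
The plan is to write the self-perimeter of $K-p$ as a boundary integral in which $p$ appears only inside the gauge of $K-p$, and then obtain convexity from a pointwise convexity statement. By (\ref{per2}), with $K-p$ as the unit ball,
$$P(K-p)=\int_{\partial K}\|\vec\tau(s)\|_{K-p}\,ds ,$$
where $\vec\tau$ is the counter-clockwise unit tangent (Remark \ref{rem3.1}). The Euclidean parametrization of $\partial K$ does not depend on $p$, so it suffices to show that for each fixed unit vector $\tau$ the map $p\mapsto \|\tau\|_{K-p}=1/r_{K-p}(\tau)$ is convex. Strict convexity is then obtained by integrating a strictly positive quantity over enough directions.

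For the pointwise step, fix $\tau$. The quantity $r_{K-p}(\tau)$ is the length of the ray from $p$ in direction $\tau$ up to $\partial K$. Parametrize by the support line hit: $r_{K-p}(\tau)=\min_{u:\langle u,\tau\rangle>0}\frac{h_K(u)-\langle u,p\rangle}{\langle u,\tau\rangle}$. Hence
$$\frac1{r_{K-p}(\tau)}=\sup_{u:\langle u,\tau\rangle>0}\frac{\langle u,\tau\rangle}{h_K(u)-\langle u,p\rangle}.$$
For fixed $u$, the function $p\mapsto c/(a-\langle u,p\rangle)$ with $c>0$ and $a-\langle u,p\rangle>0$ on $\mathrm{Int}(K)$ is convex, being a convex decreasing function of an affine function. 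A supremum of convex functions is convex, so $p\mapsto\|\tau\|_{K-p}$ is convex, and therefore $p\mapsto P(K-p)$ is convex on $\mathrm{Int}(K)$.

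For strictness, compute the Hessian of each piece. At the active $u$, the function $c/(a-\langle u,p\rangle)$ has Hessian $2c\,u\otimes u/(a-\langle u,p\rangle)^3$, which is positive semidefinite with rank one in the direction of the active normal $u=\nu(x)$. Here $x$ is the point where the ray from $p$ in direction $\tau$ exits $K$. As $s$ runs over $\partial K$, $\tau(s)$ sweeps all directions, so the active normals $\nu$ sweep an open set of directions containing at least two non-parallel ones. Integrating gives a positive definite quadratic form bounded below locally uniformly on compact subsets of $\mathrm{Int}(K)$, which yields strong convexity. At nonsmooth points, such as polygon vertices, I would replace the Hessian argument by strict convexity along segments. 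On a segment $p_t$, the active support line changes or the denominator varies nonaffinely, and the $1/(\cdot)$ function is strictly convex whenever $\langle u,p_t\rangle$ is nonconstant. This holds for all $u$ not perpendicular to the segment, and a positive-measure set of $\tau$ has such an active $u$.

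For existence and uniqueness, $P(K-p)\ge \sup_\tau \|\tau\|_{K-p}\to\infty$ as $p\to\partial K$ (compare the remark that the self-perimeter is unbounded near the boundary). A continuous convex function on $\mathrm{Int}(K)$ that blows up at the boundary attains its minimum, and strict convexity makes the minimizer unique. I expect the main obstacle to be the strictness step for nonsmooth $K$, that is, showing that the set of $\tau$ with an active normal transversal to a given segment has positive measure. I would handle it by noting that when the ray direction $\tau$ is nearly parallel to the segment direction, it exits through a boundary region whose normal is not perpendicular to the segment.
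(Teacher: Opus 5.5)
Your convexity step is correct and equivalent to the paper's Steps 1--2. Writing $1/r_{K-p}(\tau)$ as a supremum of $\langle u,\tau\rangle/(h_K(u)-\langle u,p\rangle)$ encodes the same concavity of $p\mapsto r_{K,p}(\tau)$, followed by composition with $t\mapsto 1/t$. You are also right, and more careful than the paper, that strictness cannot come pointwise. The paper's Step 2 claims each $p\mapsto 1/r_{K,p}(\tau)$ is strictly convex, but for $K=[-1,1]^2$, $\tau=e_1$ it equals $1/(1-p_1)$. Your midpoint lower bound by the active piece is legitimate.

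The gap is where you suspected, and your proposed fix does not close it. The measure that matters is arc length on $\partial K$ pushed forward by $s\mapsto\tau(s)$, not Lebesgue measure on directions; for a polygon it is atomic on the edge directions. Rays ``nearly parallel to the segment direction $h$'' exist only if $\partial K$ has arcs of positive length with tangent near $\pm h$. For $K=[-1,1]^2$ and $h=(1,1)$ there are none. The same objection confines your Hessian argument to $C^1$ boundaries.

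The missing case needs its own argument. Let $n$ be a unit vector orthogonal to $h$ and $x$ the exit point. Every normal $u$ at $x$ is active, because $\langle u,x-p_0\rangle>0$. So all of them are orthogonal to $h$ only if the normal cone at $x$ is the single ray spanned by $n$ or by $-n$. Then $x$ lies in the face $F^{+}$ or $F^{-}$ of $K$ with outer normal $n$ or $-n$. Directions from $p_0$ to $F^{+}$ (resp.\ $F^{-}$) form a closed arc inside $\{\langle\tau,n\rangle>0\}$ (resp.\ $\{\langle\tau,n\rangle<0\}$). Hence open arcs $U_\pm$ around $\pm h/|h|$ consist of good directions. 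Two cases remain:
\begin{itemize}
\item If $\{s:\tau(s)\in U_+\cup U_-\}$ has positive length, you are done.
\item If not, no boundary arc of positive length has outer normal near $n$ or $-n$. Then $F^{+}$ and $F^{-}$ are corners whose normal cones contain neighbourhoods of $n$ and $-n$, and every $s$ is good.
\end{itemize}
This gives strict convexity, which is what the uniqueness claim needs.

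Your coercivity inequality $P(K-p)\ge\sup_\tau\|\tau\|_{K-p}$ is false. $P(K-p)$ is unchanged by dilating $K$ about $p$, while the right-hand side scales inversely with the dilation. For $K=[-\epsilon,\epsilon]^2$ and $p=0$, $P=8$ but $\sup_\tau\|\tau\|_K=1/\epsilon$. Use your sup representation instead. Take $q\in\partial K$ nearest to $p$ and $u=(q-p)/|q-p|$; then $1/r_{K-p}(\tau)\ge\max\{\langle u,\tau\rangle,0\}/\mathrm{dist}(p,\partial K)$. Moreover $m:=\min_{|u|=1}\int_{\partial K}\max\{\langle u,\tau(s)\rangle,0\}\,ds>0$, because $\int_{\partial K}\tau\,ds=0$ and $K$ is not a segment. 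Hence $P(K-p)\ge m/\mathrm{dist}(p,\partial K)\to\infty$.
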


For the sake of completeness, we provide a proof below.
The self-perimeter of $K$ with respect to an interior point $p$ is defined by the integral:
 $$P_B(K-p) = \int_{\partial K} \frac{1}{r_{K,p}(\vec{\tau}(s))} ds$$
where $r_{K,p}(v)$ is the distance from the point $p$ to the boundary $\partial K$ in the direction $v$, and $\vec{\tau}(s)$ is the tangent vector at the point $s \in \partial K$.

\textbf{Step 1: Concavity of the radius function}\\
For a fixed direction $v$, consider the function $p \mapsto r_{K,p}(v)$. Let $p_1, p_2 \in \text{int}(K)$ and $\lambda \in [0,1]$. Denote $p_\lambda = \lambda p_1 + (1-\lambda) p_2$. By the convexity of $K$, the line segment connecting the boundary points $p_1 + r_{K,p_1}(v)v$ and $p_2 + r_{K,p_2}(v)v$ is entirely contained within $K$. Therefore, the distance from $p_\lambda$ to the boundary in the direction $v$ satisfies:
 $$r_{K,p_\lambda}(v) \ge \lambda r_{K,p_1}(v) + (1-\lambda) r_{K,p_2}(v)$$
Hence, $r_{K,p}(v)$ is a concave function of $p$.

\textbf{Step 2: Strict convexity of the integrand}\\
The real-valued function $f(t) = \frac{1}{t}$ is decreasing and strictly convex for $t > 0$. The composition of a decreasing, strictly convex function with a concave function yields a strictly convex function. Therefore, for any given tangent direction, the function $p \mapsto \frac{1}{r_{K,p}(\vec{\tau}(s))}$ is strictly convex on the interior $\text{int}(K)$.

\textbf{Step 3: Convexity of the functional}\\
Since the self-perimeter $P_B(p)$ is an integral (a continuous sum) of strictly convex functions over the boundary $\partial K$ with respect to a positive length measure $ds$, the function $P_B(p)$ itself is strictly convex on $\text{int}(K)$.

\textbf{Step 4: Coercivity at the boundary}\\
As the point $p$ approaches the boundary $\partial K$, the distance to the boundary in the tangent directions (close to the point of tangency) tends to $0$. Consequently, the integrand approaches infinity, and thus $P_B(p) \to \infty$ as $p \to \partial K$.

\textbf{Conclusion}\\
A strictly convex function defined on a bounded open set $\text{int}(K)$, which approaches infinity on the boundary of the set, must attain a global minimum at a unique interior point. Therefore, there exists a unique point that minimizes the self-perimeter.

\subsection{Self-Perimeter of a Regular $k$-gon}
For regular $k$-gons, as well as for the family of planar convex bodies possessing $2\pi/k$ rotational symmetry, the self-perimeter exhibits periodic behavior depending on the remainder of $k$ modulo four. Let $K$ be a regular $k$-gon centered at the origin, with circumradius $R$. The Euclidean length of each edge is $L = 2R \sin(\pi/k)$. By symmetry, the self-perimeter is $k$ times the contribution of a single edge:
 $$P = k \frac{L}{r}$$
where $r$ is the distance from the origin to the boundary in the direction parallel to the edge. Assuming a vertex is at angle $0$, the first edge connects the vertices at angles $0$ and $2\pi/k$. The vector parallel to this edge has the angle $\alpha = \frac{\pi}{2} + \frac{\pi}{k}$. We cast a ray from the origin at angle $\alpha$. The boundary of $K$ consists of line segments with normal vectors at angles $\phi_m = \frac{(2m+1)\pi}{k}$ for integer $m$. The perpendicular distance from the origin to any edge is $d = R \cos(\pi/k)$. The ray intersects the $m$-th edge at a distance:
 $$r(\alpha) = \frac{R \cos(\pi/k)}{\cos(\alpha - \phi_m)}$$
	
The ray falls on the $m$-th edge if its angle $\alpha$ is within the sector $\left[\frac{2m\pi}{k}, \frac{2(m+1)\pi}{k}\right]$. Solving for $m$ yields $m = \lfloor \frac{k+2}{4} \rfloor$. The phase difference $\Delta = \alpha - \phi_m$ dictates the length of $r(\alpha)$ and strictly depends on $k \pmod 4$:
 $$\Delta = \frac{\pi}{k} \left( \frac{k}{2} - 2m \right)$$
	
Substituting the remainders of $k \pmod 4$ into $\Delta$, we compute $P = 2k \tan(\pi/k) \cos(\Delta)$. This dictates the nature of convergence to the limit of $2\pi$, leading to the following modular partition:
\begin{itemize}
    \item \textbf{Case $k \equiv 0 \pmod 4$}: $m = k/4 \implies \Delta = 0$. $P = 2k \tan(\pi/k)$. 
    \item \textbf{Case $k \equiv 1 \pmod 4$ and $k \equiv 3 \pmod 4$}: $|\Delta| = \frac{\pi}{2k}$. $P = 2k \tan(\pi/k) \cos\left(\frac{\pi}{2k}\right)$.
    \item \textbf{Case $k \equiv 2 \pmod 4$}: $m = (k+2)/4 \implies |\Delta| = \frac{\pi}{k}$. $P = 2k \sin(\pi/k)$. The regular polygon serves as a minimum.
\end{itemize}

\subsection{Busemann's definition}
In the case of non-centrally symmetric sets, Busemann's definition deviates from (\ref{per2}). Busemann considered the integral:
\be\label{Busdef} P_{Bus}(B)= \int_{\partial B} \frac{2ds}{r_B(\vec{\tau}(s))+r_B(-\vec{\tau}(s))} \ee
namely, the integral is taken over twice the inverse length of \textit{the chord} parallel to the tangent and passing through the origin. Note that $P_{Bus}(B)=P_B(B)$ for $B\in CCS$ and that in general, unlike $P$, $P_{Bus}$ is independent of the orientation of $\partial B$ (Remark \ref{rem3.1}). This definition is more natural when extending to higher dimensions, where the length of the chord is replaced by the Lebesgue measure of the parallel hyperplane (see Sec \ref{sec40}). Evidently, for CCS sets $P_{Bus}= P$ since $r_B(\phi)=r_B(-\phi)$. 

Theorem \ref{thcon2D} is valid also for $P_{Bus}$. The proof of this theorem can be extended to $P_{Bus}$ since $v\rightarrow r_{K, p}(-v)$ and $v\rightarrow r_{K, p}(v)+ r_{K, p}(-v)$ are concave as well. Surprisingly, we did not find an explicit formulation of this result in the literature, so we formulate it below:
\begin{theorem}\label{thcon2DBus}
 Let $K \subset \mathbb{R}^2$ be a bounded convex set and $p\in Int(K)$. Then $p\rightarrow P_{Bus}(K-p)$ is strongly convex in $Int(K)$. In particular, there exists a unique interior point $p \in \text{int}(K)$ that minimizes the Busemann self-perimeter $P_{Bus}(K-\{p\})$.
\end{theorem}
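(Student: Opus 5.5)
The proof follows the four-step scheme used above for Theorem \ref{thcon2D}. The only change is that the integrand $1/r_{K,p}(\vec\tau(s))$ is replaced by $2/\ell_{K,p}(\vec\tau(s))$, where
$$\ell_{K,p}(v):=r_{K,p}(v)+r_{K,p}(-v)$$
is the length of the chord of $K$ through $p$ parallel to $v$. The key observation is that the domain of integration $\partial K$, the measure $ds$ and the tangent field $\vec\tau(s)$ do not depend on $p$. Hence
$$P_{Bus}(K-p)=\int_{\partial K}\frac{2\,ds}{\ell_{K,p}(\vec\tau(s))},$$
and convexity in $p$ reduces to pointwise convexity of the integrand for each fixed direction.

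\textbf{Step 1 (concavity of the chord length).} Fix $v$. Step 1 of the proof of Theorem \ref{thcon2D} already shows that $p\mapsto r_{K,p}(v)$ is concave. The same argument applied to $-v$ shows that $p\mapsto r_{K,p}(-v)$ is concave, so the sum $p\mapsto\ell_{K,p}(v)$ is concave and positive on $Int(K)$.

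\textbf{Step 2 (strict convexity of the integrand).} The map $t\mapsto 2/t$ is decreasing and strictly convex on $(0,\infty)$, so the composition $p\mapsto 2/\ell_{K,p}(v)$ is convex. Strict convexity is the delicate point, because $\ell_{K,p}(v)$ is constant along the direction $v$ itself. If $p_1-p_2\parallel v$, both points lie on the same chord, so the integrand does not change along that segment. I would therefore argue that strictness holds after integration rather than pointwise.

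Take $p_1\neq p_2$ and set $w=p_1-p_2$. For tangent directions $\vec\tau(s)$ not parallel to $w$, I would show that $p\mapsto\ell_{K,p}(\vec\tau(s))$ restricted to the segment $[p_2,p_1]$ is either non-constant or strictly concave; either way the composition $2/\ell$ is strictly convex along it. The tool is the standard fact that $\ell$ is concave, and that it is affine along a segment only when the chords sweep a region bounded by two line pieces of $\partial K$. In that affine case, $t\mapsto 1/(a+bt)$ with $b\neq0$ is still strictly convex. The one residual case is $b=0$: the chord lengths are constant along the segment, so the integrand is constant there and gives no strictness. I would handle it by noting that the set of $s$ for which this degenerate situation occurs cannot have full $ds$-measure, because the tangent directions of a bounded convex curve sweep all of $S^1$. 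This gives strict convexity of the integral. If needed, strong convexity on compact subsets follows from the same estimate by compactness.

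\textbf{Step 3 (coercivity).} Let $p\to q\in\partial K$. For boundary points $s$ near $q$, the chord through $p$ parallel to $\vec\tau(s)$ is short. More precisely, $\ell_{K,p}(\vec\tau(s))\lesssim \mathrm{dist}(p,\text{supporting line at }q)+|s-q|$, and I would control it through the width of $K$ near a supporting line. The integral then diverges at least like $\int ds/(\delta+|s|)\sim|\log\delta|$, so $P_{Bus}(K-p)\to\infty$.

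\textbf{Conclusion and main obstacle.} A strictly convex function on the bounded open convex set $Int(K)$ that tends to infinity at the boundary attains its minimum at a unique interior point. The main obstacle is Step 2: the naive pointwise strictness fails in the chord direction, and also in the degenerate case $b=0$ of the affine-concavity analysis. Strictness has to come from averaging over boundary directions, and making that rigorous for arbitrary, possibly polygonal, $K$ is the hardest part. Step 3 needs some care for polygons, but a lower bound by $|\log\delta|$ suffices there.
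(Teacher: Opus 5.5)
You are right that pointwise strictness fails in Step 2. The paper's own one-line argument has exactly that hole: it reuses the claim that a decreasing strictly convex function composed with a concave one is strictly convex. But your repair cannot work, because the statement is false.

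Take $K=[-1,1]^2$. For every $p\in Int(K)$, each chord of $K$ parallel to a side has length $2$, the same as the side. So each side, of length $2$, contributes $2\cdot\frac{2}{2}=2$, and $P_{Bus}(K-p)=8$ identically. The function is constant: it is not strictly (let alone strongly) convex, every interior point is a minimizer, and it does not blow up at $\partial K$.

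In your notation, take any segment $[p_2,p_1]$ not parallel to a side. Then every $s$ in the relative interior of a side falls in the residual case $b=0$. The failing step is your claim that this case cannot have full $ds$-measure because the tangent directions sweep $S^1$. The sweeping happens only at the four vertices, which carry no $ds$-mass. What controls strictness is the push-forward of $ds$ under the Gauss map (the surface area measure of $K$), and for polygons it is atomic. For a parallelogram, all of its atoms sit on directions $u$ whose chord-length function $t\mapsto\mathcal{H}_1(K\cap\{x\cdot u=t\})$ is constant.

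Your Step 3 fails as well. The bound $\ell_{K,p}(\vec\tau(s))\lesssim \mathrm{dist}(p,\text{supporting line at }q)+|s-q|$ is false whenever $q$ lies in the relative interior of a segment of $\partial K$. There the chord through $p$ parallel to that segment tends to the segment itself, not to a point. The paper's triangle formula $P_{Bus}=2\sum_i(1-\lambda_i)^{-1}$ makes this explicit: it stays bounded as $\lambda_1\to 0$ with $\lambda_2,\lambda_3$ bounded away from $0$. So coercivity fails even in a case where strict convexity does hold.

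What your Steps 1--2 do prove is plain convexity of $p\mapsto P_{Bus}(K-p)$ for every $K$. Strict convexity requires an extra hypothesis. For instance, if $K$ is strictly convex, each chord-length function is strictly concave and the surface area measure has no atoms. Then for every segment the integrand is strictly convex in $p$ for $ds$-a.e.\ $s$. Existence of an interior minimizer must then be argued separately, without relying on blow-up at flat pieces of $\partial K$.
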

The following Theorem can also be easily extended to Busemann's self-perimeter:
\begin{theorem}[Grünbaum \& Martini \cite{grunbaum1966}]\label{th3.4}
 For any convex body $K \subset \mathbb{R}^2$, let $p^*(K)$ be its generalized centroid (the unique point minimizing the self-perimeter functional). Then
 \begin{equation}
 P_B(K-p^*(K)) = \min_{p \in \text{int}(K)} P_B(K-p) \le 9
 \end{equation}
\end{theorem}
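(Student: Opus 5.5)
We need to show $\min_p P_B(K-p)\le 9$ for every convex body $K\subset\R^2$. The plan is to exhibit one good reference point, the centroid $g$ of $K$, and prove $P_B(K-g)\le 9$. The minimum is then at most this value, and its existence and uniqueness come from Theorem \ref{thcon2D}.

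\textbf{Step 1: reduce to a chord-ratio bound.} For $s\in\partial K$ let $\vec\tau(s)$ be the unit tangent. Write $r_{K,g}(\vec\tau)$ for the distance from $g$ to $\partial K$ in direction $\vec\tau$, and $w(\vec\tau)=r_{K,g}(\vec\tau)+r_{K,g}(-\vec\tau)$ for the length of the chord through $g$ parallel to $\vec\tau$. The classical Minkowski--Radon centroid estimate says that $g$ divides every chord through it in ratio at most $2:1$. Hence
\[
r_{K,g}(\vec\tau)\ \ge\ \tfrac13\, w(\vec\tau),
\qquad\text{so}\qquad
P_B(K-g)=\int_{\partial K}\frac{ds}{r_{K,g}(\vec\tau(s))}\ \le\ 3\int_{\partial K}\frac{ds}{w(\vec\tau(s))}.
\]

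\textbf{Step 2: bound $\int_{\partial K} ds/w$.} I compare $w(\vec\tau)$ with the maximal chord $W(\vec\tau)$ of $K$ in direction $\vec\tau$. The boundary arcs with a given tangent direction make up one "side" of $K$. Parametrize by the tangent angle $\phi$, so that $ds=\rho(\phi)\,d\phi$ with $\rho$ the radius of curvature. The integral $\int ds/W$ is exactly the self-perimeter of the difference body $D=K-K$ measured in its own norm, divided by two: the radius function of $D$ in direction $\vec\tau$ is $W(\vec\tau)$, and $\partial D$ has length element $2\rho\,d\phi$ at tangent angle $\phi$. So
\[
\int_{\partial K}\frac{ds}{W}=\tfrac12 P(D)\ \le\ 4,
\]
by Golab's theorem quoted above, since $D$ is CCS.

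\textbf{Step 3: close the gap and settle constants.} Step 2 bounds $\int ds/W$, but Step 1 needs $\int ds/w$, and $w\le W$ goes the wrong way. Also, combining the crude bounds $r\ge W/3$ and $\tfrac12P(D)\le 4$ would only give $12$, not $9$. This is the main obstacle. The tangent-parallel chord through $g$ must be handled jointly with the ratio estimate, not by two separate worst cases.

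\textbf{Fallback via triangles.} I would instead argue by reduction to the extremal case. The functional $K\mapsto P_B(K-g(K))$ is affine invariant. Arguing as in Grünbaum, I would approximate $K$ by polygons and show that moving an edge in a Steiner-symmetrization or shadow-system fashion, keeping the reference point at the centroid, does not decrease the quantity. The maximum over polygons then occurs at a triangle. For a triangle with its centroid, each side direction meets the boundary at ratio $1/3$ of the parallel edge length. Each edge therefore contributes exactly $3$, giving $9$. Since the minimum over $p$ is at most the value at $g$, this proves the bound.

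I expect the monotonicity under the shadow-system deformation, which uses the convexity in $p$ from Theorem \ref{thcon2D}, to be the hardest step.
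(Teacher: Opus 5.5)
\textbf{There is a genuine gap: as written, the argument proves only $P_B(K-g)\le 12$.} The paper gives no proof of this theorem (it is quoted from Grünbaum), so your argument has to stand on its own. Steps 1--2 are correct. Minkowski--Radon in the form $K-g\supseteq \tfrac13(K-K)$ even gives $r_{K,g}(\tau)\ge \tfrac13 W(\tau)$ directly. Also $\int_{\partial K}ds/W=\tfrac12 P(K-K)\le 4$; note that the length element of $\partial(K-K)$ at tangent angle $\phi$ is $(\rho(\phi)+\rho(\phi+\pi))\,d\phi$, not $2\rho\,d\phi$, but the identity survives because $W$ is even. However, the two estimates are sharp on incompatible bodies. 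The triangle has ratio $\tfrac13$ in its edge directions but $P(K-K)=6$. The parallelogram has $P(K-K)=8$ but ratio $\tfrac12$. Nothing in Steps 1--3 couples the two, so the whole content of the constant $9$ is pushed into the fallback, and there it is only asserted.

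\textbf{The fallback's key claim is the theorem itself, and the tools you cite do not supply it.}

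\emph{Steiner symmetrization.} Read this way, the claim is false. Iterated Steiner symmetrizations of a triangle converge to a disk, where the value is $2\pi<9$. The map $K\mapsto P_B(K-g(K))$ is continuous in the Hausdorff metric, so the value must drop at some step.

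\emph{Convexity in $p$.} Theorem \ref{thcon2D} is convexity in $p$ for a fixed body. It says nothing about a deformation parameter $t$, along which edge directions, edge lengths and $g(K_t)$ all move.

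\emph{Convexity along shadow systems.} The convexity in $t$ that a shadow-system argument would need actually fails here. Take $K_t=\mathrm{conv}\{(\pm1,0),(\pm t,1)\}$, which joins a triangle ($t=0$) to a parallelogram ($t=1$) by moving the top edge. Then $g(K_t)=(0,\tfrac{1+2t}{3(1+t)})$, and for small $t$ one computes
\[
P_B(K_t-g(K_t))=\tfrac{6(1+t)}{1+t+t^2}+\tfrac{3(1+t)}{1+2t}=9-3t-6t^3+O(t^4),
\qquad
\min_pP_B(K_t-p)=(2+\sqrt{1-t})^2 .
\]
Both are strictly concave near $t=0$, so the minimum does not inherit convexity either. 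The centroid value also dips to about $7.83$ at $t=(\sqrt3-1)/2$ before returning to $8$ at $t=1$. So along this edge movement it is neither convex nor monotone.

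\emph{Fixing $p=g$.} By freezing the reference point at the centroid you commit to the stronger inequality $P_B(K-g(K))\le 9$. The theorem does not assert this, and you never justify it.

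What is missing is an actual argument for the extremality of the triangle. The proposal names a strategy, but not the idea that would make it work.
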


\subsubsection*{Example}
Let $T \subset \mathbb{R}^2$ be a triangle with Euclidean edge lengths $L_1, L_2, L_3$ and corresponding vertices $V_1, V_2, V_3$. Let $p \in \text{int}(T)$ be an interior reference point. 
	
The point $p$ can be uniquely represented by its strictly positive barycentric coordinates $(\alpha, \beta, \gamma)$ such that:
 \begin{equation}
 p = \alpha V_1 + \beta V_2 + \gamma V_3 \quad \text{where} \quad \alpha + \beta + \gamma = 1
 \end{equation}
	
The asymmetric self-perimeter (\ref{per2}) relies on the lengths of the single directional rays $r_i$ originating from $p$ to the boundary in the directions parallel to the edges. For a triangle, these ray lengths evaluate to $r_1 = \alpha L_1$, $r_2 = \beta L_2$, and $r_3 = \gamma L_3$. The self-perimeter is the discrete sum over the edges:
 \begin{equation}
 P(B-p) = \sum_{i=1}^{3} \frac{L_i}{r_i} = \frac{L_1}{\alpha L_1} + \frac{L_2}{\beta L_2} + \frac{L_3}{\gamma L_3}
 \end{equation}
Which simplifies entirely to a function of the barycentric coordinates:
 \begin{equation}
 P(B-p)=\frac{1}{\alpha(p)} + \frac{1}{\beta(p) }+ \frac{1}{\gamma(p)}
 \end{equation}
Note that this perimeter is independent of the orientation of the triangle since the opposite orientation just permutes the barycentric coordinates but preserves the sum.

The Busemann perimeter (\ref{Busdef}) normalizes the Euclidean surface element using the full length of the parallel chords $c_i$ passing through $p$, rather than single rays. The full chord length parallel to an edge is the sum of the forward ray and the backward ray: $c_i = r_i + r_i^-$. For a 2D triangle, the Busemann definition multiplies the sum by $\omega_1 = 2$. The chords parallel to the edges evaluate to $c_1 = (1-\alpha)L_1$, $c_2 = (1-\beta)L_2$, and $c_3 = (1-\gamma)L_3$. The Busemann self-perimeter is given by:
 \begin{equation}
 P_{Bus}(B-p) = \sum_{i=1}^{3} \frac{2 L_i}{c_i} = \frac{2 L_1}{(1-\alpha)L_1} + \frac{2 L_2}{(1-\beta)L_2} + \frac{2 L_3}{(1-\gamma)L_3}
 \end{equation}
Which simplifies to:
 \begin{equation}
 P_{Bus}(B-p)=2 \left( \frac{1}{1-\alpha(p)} + \frac{1}{1-\beta(p)} + \frac{1}{1-\gamma(p)} \right)
 \end{equation}
Since $\alpha+\beta+\gamma=1$, we can write:
 $$ P_{Bus} = 2 \left( \frac{1}{\beta(p)+\gamma(p)} + \frac{1}{\alpha(p)+\gamma(p)} + \frac{1}{\alpha(p)+\beta(p)} \right) \ . $$
By the inequality $a^{-1}+b^{-1}\geq 4(a+b)^{-1}$, we obtain:
 $$9\leq P_{Bus}(T) \leq P_B(T) $$
where both equalities hold whenever $p$ is the centroid $\alpha=\beta=\gamma=1/3$.

\section{Self-perimeter in $\R^n$} \label{sec40} 
We define the affine invariant perimeter $P_B(K)$ of a set $K$ in a normed space $(\R^n, \|\cdot\|_B)$ where $B\subset \R^n$ is a convex, centrally symmetric set, by:
$$ P_B(K):= \int_{\partial K} \frac{\omega^{(n-1)}_B(\nu_x) }{{\cal H}_{n-1}(B\cap \Sigma^{n-1}(\nu_x))}d{\cal H}_{n-1}(x)$$ 
where $\Sigma^{n-1}(\nu_x)$ is the $(n-1)$-subspace perpendicular to the normal at $x\in\partial K$  to the normal $\nu_x$, and $\omega_B^{(n-1)}$ is defined recursively via:
 \begin{defi}\label{def4.1}
  Given an orthonormal frame $\nu_{k+1}, \ldots \nu_n$ in $\R^n$, let $B^{(k)}(\nu_{k+1}, \ldots, \nu_n)\subset\R^k$ be the intersection of $B$ with the subspace orthogonal to $\text{Sp}(\nu_{k+1}, \ldots, \nu_n)$. Define:
   \be\label{peri} \omega_B^{(k)}(\nu_{k+1}, \ldots \nu_n) = k^{-1}\int_{\partial B^{(k)}(\nu_{k+1}, \ldots , \nu_n)} \frac{ \omega_B^{(k-1)}(\nu_x, \nu_{k+1}, \ldots , \nu_n)}{{\cal H}_{k-1}(B^{(k-1)}(\nu_x, \nu_{k+1}, \ldots , \nu_n) )}d{\cal H}_{k-1}(x)\ee
   for $k=2, \ldots, n-1$, where $\nu_x\in \text{Sp}^\perp(\nu_{k+1}, \ldots, \nu_n)$ is the normal to $\partial B^{(k)}(\nu_{k+1}, \ldots , \nu_n)$ at $x$. This recursive definition starts at $k=2$ with $\omega^{(1)}(\nu_2, \ldots, \nu_n)\equiv 2$, independent of the frame $\nu_2, \ldots, \nu_n$.
 \end{defi}
 
 In particular:
 \be\label{n=2} \omega_B^{(n)} = n^{-1}\int_{\partial B^{(n)} }\frac{ \omega^{(n-1)}_B(\nu_x) d{\cal H}_{n-1}(x)}{{\cal H}_{n-1}(B^{(n-1)}(\nu_x)) } \ee
is the \textit{self-volume} of the unit ball $B^{(n)}$ while $P(B^{(n)})=n\omega^{(n)}_B$ is its self-perimeter. 

 \begin{remark}
  Note that for $n=2$, (\ref{n=2}) is consistent with (\ref{per2}) since $\omega^{(1)}=2$. Indeed, if $n=2$, then ${\cal H}_1(B^{(1)}(\phi))=2r_B(\phi)$ because $B$ is centrally symmetric. However, if $B$ is not centrally symmetric, then (\ref{n=2}) deviates from (\ref{per2}). Altogether, Definition \ref{def4.1} is valid for \textit{any} convex set $B$ provided $\{0\}\in Int(B)$.
 \end{remark}
 
 \subsection{Examples}\label{exam}
 
 \subsubsection*{The hypercube $[-1,1]^n$}
 The central cross-section of $B_n=[-1,1]^n$ passing through the origin and orthogonal to its normal is $B_{n-1}=[-1,1]^{n-1}$. The Euclidean $(n-1)$-dimensional volume of this cross-section is $\mathcal{H}_{n-1}(B_{n-1}) = 2^{n-1}$. Because the normal vector is constant across a single face, the integrand is also constant. The integral over the boundary $\partial B_n$ is the sum of the integrals over the $2n$ faces:
 $$\omega^{(n)}_{B_n} =n^{-1} \sum_{k=1}^{2n} \int_{\text{face}_k} \frac{\omega^{(n-1)}_{B_{n-1}}(\nu_k)}{2^{n-1}} d\mathcal{H}_{n-1}(x)$$
 where $\nu_k$ is the constant unit vector perpendicular to the $k$-th face. By Theorem \ref{thinlin}, it follows that $\omega^{(n-1)}_{B_{n-1}}(\nu_k)$ is independent of $k$, hence $\omega^{(n-1)}_{B_{n-1}}(\nu_k) =\omega^{(n-1)}_{B_{n-1}}$. In addition, the Lebesgue measure of $B_{n-1}$ is $2^{n-1}$, so:
  $$\omega^{(n)}_{B_n} =n^{-1}\times 2n\times \omega^{(n-1)}_{B_{n-1}} = 2 \omega^{(n-1)}_{B_{n-1}} $$
 Since $\omega^{(1)}_{B_1}=2$, it follows:
 $$ \omega^{(n)}_{B_n}= 2^n$$
in agreement with the Lebesgue measure of $[-1,1]^n$.

 \subsubsection*{The Euclidean ball} 
 Since the Lebesgue measures of the cross-sections of the Euclidean ball are all constants equal to $\omega^{(n-1)}$, it follows that the self-perimeter is:
 $$\omega_B^{(n)}=n^{-1}\int_{{\mathbb{S}^{n-1}}} \frac{\omega_B^{(n-1)} (\nu_x)}{\omega^{(n-1)}} d{\cal H}_{n-1}(x) $$
 Since $\int_{{\mathbb{S}^{n-1}}}d{\cal H}_{n-1} =n\omega^{(n)}$ and $\omega_B^{(n-1)}$ is independent of orientation, we obtain:
  $$\omega_B^{(n)}=\frac{\omega_B^{(n-1)}\omega^{(n)}} {\omega^{(n-1)}} \ . $$
 Since $\omega^{(1)}=\omega_B^{(1)}=2$, we obtain the equality $\omega_B^{(n)}=\omega^{(n)}$ for any $n\geq 2$.

 \subsubsection*{The $n$-simplex}
In this example, we study the self-volume of a non-centrally symmetric ball. Let $\Delta^n$ be an $n$-dimensional simplex with $n+1$ vertices $v_1, \ldots, v_{n+1}\in\R^n$. Any interior point $p \in \text{int}(\Delta^n)$ can be uniquely represented by its strictly positive barycentric coordinates $\lambda = (\lambda_1, \dots, \lambda_{n+1})$, where $p=\sum_{i=1}^{n+1} \lambda_i v_i$ and $\sum_{i=1}^{n+1} \lambda_i = 1$, with $\lambda_i > 0$ for all $i$. Let $F_i$ denote the $(n-1)$-dimensional face opposite to the $i$-th vertex.

 \begin{theorem} \label{simn} 
    The self-volume of the simplex $\Delta^{(n)}$ with center at $p$ is:
 $$ \omega_{\Delta^n}^{(n)} = \frac{2}{n!} \sum_{k_1, \dots, k_{n-1}} \prod_{m=1}^{n-1} \frac{1}{1 - \sum_{r=1}^{m} \lambda_{k_r}} $$
 where $\{k_1, \ldots, k_{n-1}\}\subset \{1, \ldots, n+1\}$ are $n-1$ different indices ($k_i\neq k_j$ for $i\neq j$) and $\lambda_1, \ldots, \lambda_{n+1}$ are the barycentric coordinates of $p$.
 \end{theorem}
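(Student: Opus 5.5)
We argue by induction on $n$, computing the integral (\ref{n=2}) face by face. The point $p$ plays the role of the origin, so every section in Definition \ref{def4.1} passes through $p$. Since $\partial\Delta^n$ is the union of the facets $F_1,\ldots,F_{n+1}$, and the normal $\nu_i$ is constant on $F_i$, the integrand is constant on each facet. Hence
$$\omega^{(n)}_{\Delta^n}=n^{-1}\sum_{i=1}^{n+1}\frac{{\cal H}_{n-1}(F_i)\,\omega^{(n-1)}_{\Delta^n}(\nu_i)}{{\cal H}_{n-1}(S_i)},$$
where $S_i=\Delta^n\cap(p+\Sigma^{n-1}(\nu_i))$ is the section through $p$ parallel to $F_i$.

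\textbf{Geometry of the sections.} The section $S_i$ is the set of points of $\Delta^n$ whose $i$-th barycentric coordinate equals $\lambda_i$. It is therefore an $(n-1)$-simplex homothetic to $F_i$ with ratio $1-\lambda_i$, so ${\cal H}_{n-1}(S_i)=(1-\lambda_i)^{n-1}{\cal H}_{n-1}(F_i)$ and the facet areas cancel. Inside $S_i$, the point $p$ has barycentric coordinates $\mu_j=\lambda_j/(1-\lambda_i)$ for $j\neq i$. These are taken with respect to the vertices $\lambda_i v_i+(1-\lambda_i)v_j$.

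\textbf{The key reduction.} Next we identify $\omega^{(n-1)}_{\Delta^n}(\nu_i)$ with the intrinsic self-volume of the $(n-1)$-simplex $S_i$ centered at $p$. By Definition \ref{def4.1}, this quantity is built recursively from sections of $S_i$ by subspaces through $p$ inside $p+\Sigma^{n-1}(\nu_i)$, and sections of $S_i$ are themselves sections of $\Delta^n$. So the recursion restricted to $S_i$ is exactly the recursion defining the self-volume of $S_i$ as a body in $\R^{n-1}$.

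We then need $\omega^{(k)}$ to be invariant under origin-preserving affine maps, in particular dilations. This is the Jacobian-cancellation invariance announced in the introduction; it can also be checked directly by induction, since at each level numerator and denominator scale by the same factor. With it, the self-volume of $S_i$ depends only on the barycentric coordinates $\mu$. Plugging in the induction hypothesis gives
$$\omega^{(n)}_{\Delta^n}=\frac1n\sum_{i}\frac{1}{(1-\lambda_i)^{n-1}}\cdot\frac{2}{(n-1)!}\sum_{k_2,\ldots,k_{n-1}\neq i}\ \prod_{m=2}^{n-1}\frac{1}{1-\sum_{r=2}^{m}\mu_{k_r}}.$$
Here the inner sum runs over ordered $(n-2)$-tuples of distinct indices different from $i$.

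\textbf{Algebra.} For each factor we have
$$1-\sum_{r=2}^m\mu_{k_r}=\frac{1-\lambda_i-\sum_{r=2}^m\lambda_{k_r}}{1-\lambda_i}.$$
The product over the $n-2$ factors therefore contributes $(1-\lambda_i)^{n-2}$. Combined with the prefactor $(1-\lambda_i)^{-(n-1)}$, this leaves exactly $1/(1-\lambda_i)$, which is the $m=1$ factor with $k_1=i$. The remaining factors become $1/(1-\sum_{r=1}^m\lambda_{k_r})$. Summing over $i=k_1$ produces the sum over ordered $(n-1)$-tuples of distinct indices, with prefactor $\frac{2}{n\,(n-1)!}=\frac{2}{n!}$, as claimed.

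\textbf{Base case.} The base case is $n=1$, where $\omega^{(1)}=2$ (a single empty tuple with empty product). Equivalently, one can check $n=2$ directly: the formula gives $\sum_i 1/(1-\lambda_i)$, which is half of $P_{Bus}$ computed in the triangle example.

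\textbf{Main obstacle.} The main difficulty is the key reduction: making rigorous that $\omega^{(n-1)}_{\Delta^n}(\nu_i)$, defined through frames of $\R^n$, coincides with the intrinsic self-volume of $S_i$. This requires that it is invariant under the isometry of $p+\Sigma^{n-1}(\nu_i)$ with $\R^{n-1}$ and under the dilation by $1-\lambda_i$. I would prove this scaling and isometry invariance first, as a short lemma by induction on $k$ in Definition \ref{def4.1}.
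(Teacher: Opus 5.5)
Your proof is correct and follows the same route as the paper: split $\partial\Delta^n$ into the facets $F_i$, use that the section $S_i$ through $p$ is homothetic to $F_i$ with ratio $1-\lambda_i$, and induct on $n$. Your version is actually more complete than the paper's. Its inductive step only asserts a recursion appending the factor $1/(1-\sum_{r=1}^{n}\lambda_{k_r})$, without the rescaling $\mu_j=\lambda_j/(1-\lambda_i)$ that you use to show why the powers of $1-\lambda_i$ cancel. Also, since the induction hypothesis applies to every $(n-1)$-simplex, your lemma only needs the isometry invariance of Definition~\ref{def4.1}, not dilation invariance.
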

\begin{proof}
By applying the recursive definition of $\omega_{\Delta^n}$ (\ref{peri}), the integral over the boundary $\partial \Delta^n$ decomposes into a sum over the $n+1$ faces. For a specific face $F_i$, the integration is evaluated over the $(n-1)$-dimensional cross-section $S_i$ passing through $p$ and parallel to $F_i$. Because $S_i$ is parallel to $F_i$, it is a simplex similar to $F_i$ with a linear scaling factor of $(1-\lambda_i)$. Consequently, the ratio of their Euclidean volumes, which appears in the denominator, is given by:
\begin{equation}
\frac{\mathcal{H}_{n-1}(S_i)}{\mathcal{H}_{n-1}(F_i)} = (1-\lambda_i)^{n-1}
\end{equation}
 
 \textbf{Base Case ($n=1$):} \\
 For $n=1$, the sum of indices cancels out and the empty product is defined as 1. We obtain:
 $$ \omega_{\Delta^1}^{(1)}= \frac{2}{1!} = 2 $$
 
 \textbf{Inductive Step:} \\
 Assume the formula holds for $n$:
 $$ \omega_{\Delta^n}^{(n)} = \frac{2}{n!} \sum_{k_1, \dots, k_{n-1}} \prod_{m=1}^{n-1} \frac{1}{1 - \sum_{r=1}^{m} \lambda_{k_r}} $$
 
 We will prove its validity for $n+1$. Based on the decomposition properties of the volume, the following recursive formula holds:
 $$ \omega_{\Delta^{n+1}}^{(n+1)} = \frac{1}{n+1} \sum_{k_n} \frac{1}{1 - \sum_{r=1}^{n} \lambda_{k_r}} \omega^{(n)}_{\Delta^{n}} $$
 
 Substitute the expression for $\omega_{\Delta^n}^{(n)}$ from the inductive hypothesis into the recursion:
 $$ \omega_{\Delta^{n+1}}^{(n+1)} = \frac{1}{n+1} \sum_{k_n} \frac{1}{1 - \sum_{r=1}^{n} \lambda_{k_r}} \left( \frac{2}{n!} \sum_{k_1, \dots, k_{n-1}} \prod_{m=1}^{n-1} \frac{1}{1 - \sum_{r=1}^{m} \lambda_{k_r}} \right) $$
 $$= \frac{2}{(n+1)!} \sum_{k_1, \dots, k_n} \prod_{m=1}^{n} \frac{1}{1 - \sum_{r=1}^{m} \lambda_{k_r}} \ . $$
\end{proof}

\subsection{Convexity of self-volume}
By Theorem \ref{simn} we obtain:
 \begin{cor}\label{cor4.1}
 The self-volume $\omega^{(n)}(\Delta^n-p)$ is a strictly convex function of the center $p$. In particular, there exists a unique $p^*\in \Delta^n$ for which $\omega^{(n)}(\Delta^n-p^*)$ attains its minimum, and $\lambda_i=\frac{1}{n+1}$, $i=1, \ldots, n+1$ are the barycentric coordinates of $p^*$.
 $$ \min_p\omega_{\Delta^n}^{(n)}= \frac{(n+1)^n}{n!} \ . $$
\end{cor}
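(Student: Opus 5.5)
We will work directly from the explicit formula of Theorem \ref{simn}, regarding $\omega^{(n)}_{\Delta^n}$ as a function of the barycentric coordinates $\lambda=(\lambda_1,\ldots,\lambda_{n+1})$. The map $p\mapsto\lambda(p)$ is affine and injective on the hyperplane $\sum\lambda_i=1$. Convexity, and strict convexity, in $p$ are therefore equivalent to the same properties in $\lambda$ restricted to the open simplex $\{\lambda_i>0,\ \sum\lambda_i=1\}$.

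\textbf{Step 1: convexity of each summand.} Fix an ordered tuple $(k_1,\ldots,k_{n-1})$ of distinct indices. The summand is $\prod_{m=1}^{n-1} g_m(\lambda)^{-1}$, where $g_m(\lambda)=1-\sum_{r\le m}\lambda_{k_r}$ is affine and strictly positive on the open simplex, since it equals a sum of at least two remaining $\lambda_j$'s. We write it as $\exp\bigl(-\sum_m \log g_m\bigr)$. Each $-\log g_m$ is convex, being a convex function composed with an affine map, so the sum is convex, and the exponential of a convex function is convex. This gives convexity of every term, hence of $\omega^{(n)}_{\Delta^n}$.

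\textbf{Step 2: strict convexity.} A single summand is not strictly convex, because it is constant along directions that keep every $g_m$ fixed. We therefore use the sum over all orderings. Suppose $\omega$ were affine along some segment in a direction $d\neq0$ with $\sum d_i=0$. Since each summand is convex, each would then have to be affine along that segment. For $m=1$ we have $-\log g_1=-\log(1-\lambda_{k_1})$, which is strictly convex in the direction $d$ unless $d_{k_1}=0$. Along a segment where $-\log g_1$ is strictly convex, the function $\exp(\text{convex})$ is strictly convex, because $e^{\phi}$ is strictly convex whenever $\phi$ is strictly convex. As $k_1$ ranges over all indices, we would need $d_{k}=0$ for every $k$, a contradiction. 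For $n=1$ the formula is the constant $2$ on a single point, so that case is trivial. The main obstacle is making this "strict convexity survives the product" argument clean. I will do it by writing the Hessian of $e^{\psi}$ as $e^{\psi}(\nabla^2\psi+\nabla\psi\nabla\psi^T)$, which is bounded below by $e^\psi\,\nabla^2\psi$, and $\nabla^2\psi$ contains the term $(1-\lambda_{k_1})^{-2}e_{k_1}e_{k_1}^T$.

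\textbf{Step 3: existence, uniqueness and location of the minimizer.} As $\lambda_i\to0$, some complementary $g_m$ tends to $0$. For instance, choose an ordering that lists all indices except $i$ and one other index $j$; then $g_{n-1}=\lambda_i+\lambda_j$, which tends to $0$ when $\lambda_j$ is also small. More simply, because $\omega$ is strictly convex, it has at most one minimizer, and by symmetry that minimizer must be the barycenter. The formula is invariant under permutations of $\lambda$, since the sum runs over all ordered tuples. Any minimizer $\lambda^*$ therefore has every permutation of $\lambda^*$ also a minimizer, and uniqueness forces $\lambda^*=(\tfrac1{n+1},\ldots)$. Existence follows by checking that the symmetric point is a critical point: it is, by symmetry, once the gradient is restricted to the hyperplane $\sum\lambda_i=1$. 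A convex function has a global minimum at any critical point, so no boundary coercivity argument is needed.

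\textbf{Step 4: the value.} At $\lambda_i=\frac1{n+1}$ we get $g_m=1-\frac{m}{n+1}=\frac{n+1-m}{n+1}$. Hence each summand equals $\prod_{m=1}^{n-1}\frac{n+1}{n+1-m}=\frac{(n+1)^{n-1}\cdot 1}{n!/1}\cdot 1$. More precisely, $\prod_{m=1}^{n-1}(n+1-m)=\frac{(n)!}{1}$, so each summand is $\frac{(n+1)^{n-1}}{n!}$. The number of ordered tuples is $\frac{(n+1)!}{2}$. Therefore
\[
\min_p\omega^{(n)}_{\Delta^n}=\frac{2}{n!}\cdot\frac{(n+1)!}{2}\cdot\frac{(n+1)^{n-1}}{n!}=\frac{(n+1)^n}{n!},
\]
as claimed.
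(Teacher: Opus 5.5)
Your proof is correct for $n\ge 2$ and follows the paper's route: log-convexity of each summand in Theorem \ref{simn}, permutation symmetry, then uniqueness. It is more complete than the paper's one-line argument, because log-convexity of the summands gives only convexity. What actually closes the proof is your observation that strictness comes from summing over all first indices $k_1$ (the $(1-\lambda_{k_1})^{-2}e_{k_1}e_{k_1}^T$ term), together with your critical-point argument for existence.

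That existence argument is genuinely needed, not just simpler. The function $\omega$ stays bounded when a single $\lambda_i\to 0$, since every $g_m$ still contains some other $\lambda_j$. So the coercivity claim that opens your Step 3 is false as stated.

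One correction: for $n=1$ the domain is an open interval, not a point, and $\omega^{(1)}\equiv 2$ on it. Strict convexity and uniqueness therefore fail there, and the corollary should be read with $n\ge 2$.
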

\begin{proof}
 Strict convexity follows from the log-convexity of $\log(f)$ where $f(\vec{\lambda}) = \prod_{m=1}^{n-1} \frac{1}{1-S_m} $ and $S_m=\sum_{i=1}^m \lambda_{k_i}$. Hence, the minimum is unique. Since this function is invariant under permutation of $\lambda_i$, the minimum must attain where all $\lambda_i$ are equal, namely $\lambda_i=(n+1)^{-1}$.
\end{proof}

\begin{theorem}\label{product}
  Let $A \subset \mathbb{R}^n$ and $B \subset \mathbb{R}^m$ be convex sets. The self-volume of their Cartesian product $A \times B \subset \mathbb{R}^{n+m}$ satisfies:
  \begin{equation}
  \omega^{(n+m)}_{A \times B} = \omega^{(n)}_A \omega^{(m)}_B
  \end{equation}
\end{theorem}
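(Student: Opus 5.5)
Our plan is induction on $N=n+m$, using the recursive Definition \ref{def4.1} and splitting the boundary $\partial(A\times B)=(\partial A\times B)\cup(A\times\partial B)$. The two pieces overlap only on $\partial A\times\partial B$, which is ${\cal H}_{N-1}$-null, so the boundary integral splits into two terms. We induct on the stronger statement that $\omega^{(k)}_{C\times D}=\omega^{(k_1)}_C\omega^{(k_2)}_D$ whenever $C\subset\R^{k_1}$ and $D\subset\R^{k_2}$ are convex bodies containing the origin in their interiors and $k=k_1+k_2$. The base cases are $k_1=0$ or $k_2=0$ (trivial, with $\omega^{(0)}:=1$) and $k_1=k_2=1$. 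The latter is the rectangle $[-a_1,b_1]\times[-a_2,b_2]$, handled directly from (\ref{n=2}): the two vertical sides contribute $2\cdot\frac{(a_2+b_2)\cdot 2}{a_2+b_2}$, the horizontal ones likewise, giving $\tfrac12\cdot 8=4=2\cdot2$.

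\textbf{Key step (cross-sections).} At a point $x=(a,b)\in\partial A\times \mathrm{int}B$ the outer normal is $\nu_x=(\nu_a,0)$, with $\nu_a$ the normal of $\partial A$ at $a$. Then the section satisfies
$$(A\times B)^{(N-1)}(\nu_x)=A^{(n-1)}(\nu_a)\times B,$$
so ${\cal H}_{N-1}$ of it equals ${\cal H}_{n-1}(A^{(n-1)}(\nu_a))\,{\cal H}_m(B)$. Its self-volume $\omega^{(N-1)}_{A\times B}(\nu_x)$ is again the self-volume of a product, of dimension $N-1$, so the induction hypothesis gives $\omega^{(n-1)}_A(\nu_a)\,\omega^{(m)}_B$. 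This requires checking that the recursion in Definition \ref{def4.1}, applied to a section inside a subspace $\Sigma=\Sigma_A\times\R^m$, agrees with applying it to the convex body $A^{(n-1)}(\nu_a)\times B$ regarded intrinsically in $\Sigma$. This holds because every further frame vector used in the recursion either lies in $\Sigma_A$ or in $\R^m$ on the faces where the integrals live. That claim is itself part of what the induction must carry: the normals of a product body are always split, i.e. of the form $(\nu,0)$ or $(0,\nu)$.

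\textbf{Integration.} Using ${\cal H}_{N-1}=d{\cal H}_{n-1}(a)\,d{\cal H}_m(b)$ on $\partial A\times B$, Fubini gives
$$\int_{\partial A\times B}\frac{\omega^{(n-1)}_A(\nu_a)\,\omega^{(m)}_B}{{\cal H}_{n-1}(A^{(n-1)}(\nu_a))\,{\cal H}_m(B)}\,d{\cal H}_{N-1}=\omega^{(m)}_B\cdot n\,\omega^{(n)}_A .$$
Here the factor ${\cal H}_m(B)$ cancels and (\ref{n=2}) for $A$ is used. Symmetrically, the second piece gives $\omega^{(n)}_A\cdot m\,\omega^{(m)}_B$. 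Summing the two pieces and dividing by $N=n+m$ yields $\omega^{(n)}_A\omega^{(m)}_B$.

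\textbf{Main obstacle.} The delicate point is making the induction well-posed for the intermediate sections. The sets $A^{(n-1)}(\nu_a)$ are sections of $A$, not arbitrary bodies, and the recursion for $\omega^{(N-1)}_{A\times B}(\nu_x)$ refers to the frame $(\nu_x,\ldots)$ in $\R^N$. We would first prove a short lemma stating that $\omega^{(k)}$ of a section depends only on the section as a convex body in its own $k$-dimensional subspace (Euclidean structure included), by induction on $k$ from Definition \ref{def4.1}. The product statement then applies verbatim at every level. The lower-dimensional case $n=1$, where $\partial A$ consists of two points, is treated with counting measure ${\cal H}_0$ and $\omega^{(0)}=1$, consistently with $\omega^{(1)}\equiv2$.
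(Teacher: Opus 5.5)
Your proposal is correct and follows essentially the same route as the paper: induction on $n+m$, splitting $\partial(A\times B)=(\partial A\times B)\cup(A\times\partial B)$, identifying the central section as $A^{(n-1)}(\nu_a)\times B$, applying the induction hypothesis, and using Fubini to cancel ${\cal H}_m(B)$, which gives $(n+m)\,\omega^{(n)}_A\omega^{(m)}_B$ before dividing by $n+m$. The paper leaves two points implicit that you state: that $\omega^{(k)}$ of a section depends only on the section in its own subspace, and the convention $\omega^{(0)}=1$ needed when $n=1$ or $m=1$, a case the paper's hypothesis $p,q\ge 1$ does not cover; stating them is what makes the induction well-posed.
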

\subsubsection*{Proof of Theorem \ref{product}}

We proceed by mathematical induction on the sum of the dimensions, $k = n + m$.

\paragraph{Base Case ($k = 2$):}
Let $n = 1$ and $m = 1$. Both $A$ and $B$ are one-dimensional segments, denoted $I_1$ and $I_2$. The Cartesian product $I_1 \times I_2$ is a 2D rectangle. The boundary $\partial(I_1 \times I_2)$ decomposes into two parts: $(\partial I_1 \times I_2) \cup (I_1 \times \partial I_2)$. For the faces $\partial I_1 \times I_2$, the normal vectors are parallel to $I_1$. The orthogonal cross-section is $I_2$. The Euclidean length of these two faces is $2|I_2|$.
The contribution to the self-perimeter is:
\[
\int_{\partial I_1 \times I_2} \frac{\omega^{(1)}_{I_2}}{|I_2|} d\mathcal{H}_1 = 2|I_2| \frac{\omega^{(1)}_{I_2}}{|I_2|} = 2\omega^{(1)}_{I_2}
\]
By symmetry, the integral over $I_1 \times \partial I_2$ evaluates to $2\omega^{(1)}_{I_1}$. Thus, the total self-perimeter is:
\[
\omega^{(2)}_{I_1 \times I_2}= \omega^{(1)}_{I_2}+ \omega^{(1)}_{I_1}=4
\]
This is consistent with the self-volume of $[-1,1]^2$ via Section \ref{exam}.

\paragraph{Inductive Hypothesis:}
Assume the theorem holds for any pair of convex sets whose dimensions sum to $k-1$. That is, for any integers $p, q \ge 1$ such that $p+q = k-1$, and appropriate convex sets $U \subset \mathbb{R}^p, W \subset \mathbb{R}^q$, we have:
\[
\omega^{(k-1)}_{U \times W}= \omega^{(p)}_U \omega^{(q)}_W
\]

\paragraph{Inductive Step:}
Consider convex sets $A \subset \mathbb{R}^n$ and $B \subset \mathbb{R}^m$ such that $n+m = k$. The essential boundary of the Cartesian product decomposes into two disjoint components (up to a set of measure zero):
\[
\partial(A \times B) = (\partial A \times B) \cup (A \times \partial B)
\]
We evaluate the self-perimeter integral over these two components separately. For any point $(x, y) \in \partial A \times B$, the outward normal vector $\nu_x$ lies entirely within the subspace $\mathbb{R}^n$. The orthogonal cross-section is given by $A_x \times B$, where $A_x = A \cap \Sigma(\nu_x)$ is an $(n-1)$-dimensional convex set. The sum of the dimensions of $A_x$ and $B$ is $(n-1) + m = k-1$. By the inductive hypothesis, we can decompose its volume:
\[
\omega^{(k-1)}_{A_x \times B}= \omega^{(n-1)}_{A_x}\omega^{(m)}_B
\]
The contribution to the self-perimeter from this component is:
\begin{align*}
I_1 &= \int_{\partial A \times B} \frac{\omega^{(k-1)}_{A_x \times B}}{\mathcal{H}_{k-1}(A_x \times B)} d\mathcal{H}_{k-1} \\
&= \int_{\partial A} \int_{B} \frac{\omega^{(n-1)}_{A_x} \omega^{(m)}_B}{|A_x| |B|} dy \, d\mathcal{H}_{n-1}(x)
\end{align*}
Integration over $y \in B$ yields the Euclidean volume $|B|$, which completely cancels the $|B|$ in the denominator:
\begin{align*}
I_1 &= \omega^{(m)}_B \int_{\partial A} \frac{\omega^{(n-1)}_{A_x}}{|A_x|} d\mathcal{H}_{n-1}(x)
\end{align*}
By the recursive definition, the remaining integral is exactly the self-perimeter of $A$, denoted $P^{(n)}(A)$. Since $P^{(n)}(A) = n \omega^{(n)}_A$, we obtain:
\[
I_1 = n \omega^{(n)}_A \omega^{(m)}_B
\]
Similarly:
\[
\omega^{(k-1)}_{A \times B_y} = \omega^{(n)}_A\omega^{(m-1)}_{B_y}
\]
The contribution to the self-perimeter from this part is:
$$
I_2 = \int_{A \times \partial B} \frac{\omega^{(n)}_A \omega^{(m-1)}_{B_y}}{|A| |B_y|} dx \, d\mathcal{H}_{m-1}(y) = m \omega^{(n)}_A \omega^{(m)}_B
$$
So:
$$\omega^{(n+m)}_{A\times B}= (m+n)^{-1}(I_1+I_2) = \omega^{(n)}_A \omega^{(m)}_B \ . $$
\qed

\subsection{Strict Convexity for Cartesian Products}\label{sec4.3}

From Theorem \ref{product} and Theorem \ref{thcon2DBus} we obtain:
\begin{cor}
 If $A\in \R^n$ is a Cartesian product of convex sets of dimension 2, intervals of dimension 1, and simplices of any dimension, then $p\rightarrow \omega^{(n)}(A-\{p\})$ is a strongly convex function in $Int(A)$. In particular, there exists a unique $p\in Int(A)$ which minimizes $\omega^{(n)}(A-\{p\})$.
\end{cor}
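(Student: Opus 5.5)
By Theorem \ref{product}, applied repeatedly, the self-volume factorizes. Write $A=A_1\times\cdots\times A_k$ with $A_i\subset\R^{n_i}$ and $p=(p_1,\ldots,p_k)$. Then $A-p=\prod_i (A_i-p_i)$, so
$$\omega^{(n)}(A-p)=\prod_{i=1}^k \omega^{(n_i)}(A_i-p_i).$$
A product of positive convex functions of \emph{separate} variables need not be convex, so I will not work with convexity of the factors directly. Instead I will prove that each factor is \emph{log-convex}. Then $\log\omega^{(n)}(A-p)=\sum_i\log\omega^{(n_i)}(A_i-p_i)$ is convex in $p$ as a sum of convex functions of disjoint blocks of variables. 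Finally $\omega^{(n)}=\exp(\log\omega^{(n)})$ is convex, because $\exp$ is convex and increasing.

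\textbf{Log-convexity of the factors.} I treat the three types of factor in turn.
\begin{itemize}
\item \emph{Simplices.} By Theorem \ref{simn}, $\omega^{(n_i)}_{\Delta}$ is a positive sum of terms $\prod_m (1-S_m)^{-1}$. Each $S_m$ is affine in the barycentric coordinates, hence affine in $p_i$. Each term therefore has logarithm $-\sum_m\log(1-S_m)$, which is convex. A sum of log-convex functions is log-convex, as in the proof of Corollary \ref{cor4.1}.
\item \emph{Planar factors.} For $n_i=2$, Definition \ref{def4.1} gives $\omega^{(2)}(A_i-p_i)=\tfrac12 P_{Bus}(A_i-p_i)=\int_{\partial A_i} c_{p_i}(\vec\tau(s))^{-1}\,ds$. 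Here $c_{p}(v)=r_{A_i,p}(v)+r_{A_i,p}(-v)$ is the chord through $p$ parallel to $v$, and it is concave in $p$ by Step 1 of the proof of Theorem \ref{thcon2D}. Hence $-\log c_p(v)$ is convex, so $1/c_p(v)$ is log-convex, and integrating in $s$ preserves log-convexity.
\item \emph{Interval factors.} Here $\omega^{(1)}\equiv2$ is constant, which is trivially log-convex.
\end{itemize}

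\textbf{Strictness.} In the simplex case I will argue as follows. For a nonzero displacement of $p_i$, some barycentric coordinate changes, so for a suitable ordering $k_1,\ldots$ the affine quantity $S_1=\lambda_{k_1}$ changes. Then $-\log(1-S_1)$ is strictly convex along that line, and this gives strict log-convexity of that term and hence of the sum. In the planar case, $c_p(v)$ is constant only when $p$ moves parallel to $v$, while tangent directions along $\partial A_i$ cover a set of positive measure of directions. So for any direction of motion there is a positive-measure set of $s$ along which the chord genuinely varies, and I aim to show, as in Theorem \ref{thcon2DBus}, that it is strictly concave there, which yields strict convexity of the integral.

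\textbf{Main obstacle.} Interval factors contribute a constant, so strict convexity, and uniqueness of the minimizer, can only hold in the coordinates of the non-interval factors. In the interval coordinates the function is constant. I would therefore state the conclusion as strict convexity in the non-interval variables, which for $A$ without interval factors is the full claim. Justifying strict concavity of the chord length in the planar case is the delicate step: for polygons, $c_p(v)$ is piecewise affine in $p$, so strictness must come from the integration over $s$ rather than pointwise. If that fails, I would fall back on strong convexity of $P_{Bus}$ as asserted in Theorem \ref{thcon2DBus}, combined with the positivity of the other factors.

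\textbf{Coercivity and existence of the minimizer.} As $p\to\partial A$, some $p_i\to\partial A_i$ with $A_i$ not an interval, and that factor tends to $\infty$ (Step 4 of the proof of Theorem \ref{thcon2D}, or the factors $1/\lambda$ in Theorem \ref{simn}). The remaining factors are bounded below by positive constants. A strictly convex coercive function on a bounded open set has a unique interior minimizer.
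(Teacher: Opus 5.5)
\textbf{Verdict.} Your proposal has genuine gaps, in the coercivity step and in planar strictness, and these are places where the statement itself is false.

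\textbf{What works.} You take a different route from the paper, which only cites Theorem \ref{product} and Theorem \ref{thcon2DBus} and leaves implicit why a product of convex factors in separate variables should be convex. Your log-convexity step supplies exactly what is missing there. It correctly gives \emph{non-strict} convexity of $p\mapsto\omega^{(n)}(A-p)$. Your simplex strictness argument is fine, and your objection about interval factors is correct.

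\textbf{First gap: coercivity fails.} The chord-normalized self-volume does not blow up on the relative interior of a facet. For a triangle $T$, $\omega^{(2)}(T-p)=\sum_i(1-\lambda_i)^{-1}\to 5$ as $p$ tends to the midpoint of an edge. Theorem \ref{simn} contains no factors $1/\lambda_i$: each $1-S_m$ with $m\le n-1$ is a sum of at least two barycentric coordinates. So blow-up occurs only near faces of codimension at least two. Step 4 of the proof of Theorem \ref{thcon2D} concerns single rays $r_{K,p}(\vec\tau)$, not chords $r_{K,p}(\vec\tau)+r_{K,p}(-\vec\tau)$. For simplex factors you can replace coercivity by symmetry: averaging over permutations of the $\lambda_i$, as in Corollary \ref{cor4.1}, puts the minimizer at the centroid. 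For planar factors there is no such repair.

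\textbf{Second gap: planar strictness cannot be completed.} Your fallback to Theorem \ref{thcon2DBus} does not help, because that theorem is false as stated. Your claim that $c_p(v)$ is constant only when $p$ moves parallel to $v$ fails whenever the chord ends on two parallel edges. For a parallelogram, $\omega^{(2)}(K-p)\equiv 4$.

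A less degenerate example is the trapezoid $K$ with vertices $(\pm1,0)$, $(\pm a,1)$, where $\tfrac12<a<1$. For $p=(x,y)$:
\begin{itemize}
\item the horizontal chord has length $2(1-(1-a)y)$;
\item the chords parallel to the two slanted sides (each of length $L$) have length at most $L$;
\item both equal $L$ exactly when $|x|\le a-(1-a)(1-y)$.
\end{itemize}
Hence
\[
\omega^{(2)}(K-p)\ \ge\ \frac{1+a}{1-(1-a)y}+2\ \ge\ 3+a ,
\]
with the first inequality an equality on that region. So $\omega^{(2)}(K-p)$ is independent of $x$ on a nonempty open set. Its infimum $3+a$ over $Int(K)$ is attained only on the boundary segment $\{y=0,\ |x|\le 2a-1\}$.

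\textbf{What your argument actually proves.} The corollary fails for general planar factors, not only for intervals. Your argument gives convexity in general. It gives strict convexity with a unique interior minimizer (the product of the centroids) when every factor is a simplex of dimension at least two; one-dimensional simplices are intervals. Planar factors would need an extra hypothesis, such as a strictly convex boundary.
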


\subsection{Conjecture} \label{subsec4.3} 
\begin{itemize}
    \item The CCS (convex centrally symmetric) body in $\R^n$ of maximal self-volume is the $n$-cube $\omega^{(n)}_B=2^n$.
    \item The non-centrally symmetric, convex body in $\R^n$ of maximal self-volume with respect to its centroid is the $n$-simplex $B=\Delta^n$ ($\omega^{(n)}_B = \frac{(n+1)^n}{n!} $).
    \item If $n=2k$, then the CCS body of minimal self-volume is the Cartesian product of $n$ hexagons ($\omega^{(n)}_B=3^k$). If $n=2k+1$, it is the Cartesian product of $k$ hexagons and a 1D interval ($\omega^{(n)}_B=2\times 3^k$).
\end{itemize} 

\subsection{Invariance of the Self-Perimeter Under Affine Transformations}


\subsubsection*{Proof of Affine Invariance}

We rely on the following geometric property established by Busemann, which we quote without proof:

\begin{theorem}\label{Busth} [Busemann] The ratio of the Euclidean surface measure to the volume of the parallel central cross-section is invariant under any non-singular linear transformation $A: \mathbb{R}^k \to \mathbb{R}^k$. Specifically, both the surface element $d\mathcal{H}^{k-1}(x)$ and the cross-sectional volume ${\cal H}_{k-1}(K \cap \tau_x^\perp)$ scale by the identical Jacobian factor $J = |\det A| \, |A^{-t} n_x|$, leading to exact cancellation:
\end{theorem}
\begin{equation}
\frac{d\mathcal{H}_{k-1}(Ax)}{{\cal H}_{k-1}(A(K) \cap A(\tau_x)^\perp)} = \frac{d\mathcal{H}^{k-1}(x)}{{\cal H}_{k-1}(K \cap \tau_x^\perp)}
\end{equation}

\begin{theorem}\label{thinlin} For any $n \ge 1$ and any non-singular affine transformation $T$ preserving the origin, the self-volume $\omega^{(n)}_B$ is invariant: $\omega^{(n)}(B)= \omega^{(n)}(T(B))$.
\end{theorem}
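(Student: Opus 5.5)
Induction on $n$, using Theorem \ref{Busth} at every level of the recursion in Definition \ref{def4.1}. Since $T$ preserves the origin, it is a non-singular linear map, $T=A$. The induction hypothesis should be strong enough to cover all lower-dimensional sections. We therefore prove: for every $k\ge 1$, every convex body $K$ of dimension $k$ lying in a $k$-dimensional linear subspace $V$ with $0\in \mathrm{Int}_V(K)$, and every linear isomorphism $A:V\to V'$ onto another $k$-dimensional subspace, we have $\omega^{(k)}(A K)=\omega^{(k)}(K)$. Here $\omega^{(k)}(K)$ means the recursive quantity (\ref{peri}) computed intrinsically in $V$. Working intrinsically is what makes the recursion well defined: the quantity depends only on $K\subset V$ with its induced Euclidean structure, and not on the ambient frame $\nu_{k+1},\dots,\nu_n$.

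\textbf{Base case.} For $k=1$, $\omega^{(1)}\equiv 2$ for every segment, so invariance is trivial.

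\textbf{Inductive step.} Write
$$\omega^{(k)}(K)=k^{-1}\int_{\partial K}\frac{\omega^{(k-1)}(K\cap\Sigma(\nu_x))}{\mathcal H_{k-1}(K\cap\Sigma(\nu_x))}\,d\mathcal H_{k-1}(x).$$
Here $\Sigma(\nu_x)$ is the linear hyperplane of $V$ through $0$ parallel to the tangent hyperplane at $x$, and it is defined for $\mathcal H_{k-1}$-a.e.\ $x\in\partial K$. The map $A$ sends $\partial K$ bijectively onto $\partial(AK)$. It also sends tangent hyperplanes to tangent hyperplanes, so the central section parallel to the tangent plane at $Ax$ is exactly $A(K\cap\Sigma(\nu_x))$, since $A$ is linear and fixes $0$.

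Two facts now combine.
\begin{itemize}
\item By the induction hypothesis, applied to the $(k-1)$-dimensional body $K\cap\Sigma(\nu_x)$ and the restriction of $A$ to $\Sigma(\nu_x)$, the numerator satisfies $\omega^{(k-1)}(A(K\cap\Sigma(\nu_x)))=\omega^{(k-1)}(K\cap\Sigma(\nu_x))$.
\item By Theorem \ref{Busth}, applied in $V\cong\R^k$ (after composing with isometries $V\to\R^k$ and $V'\to\R^k$, which change nothing), the surface element divided by the section volume is invariant under the push-forward by $A$.
\end{itemize}
The change of variables $y=Ax$ in the boundary integral for $\omega^{(k)}(AK)$ therefore reproduces the integral for $\omega^{(k)}(K)$ exactly. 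This closes the induction, and $k=n$ with $V=\R^n$ gives the theorem.

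\textbf{Checking the Jacobian in Theorem \ref{Busth}.} The main obstacle is making sure the Jacobian cancellation is correct as stated. The surface element scales by $|\det A|\,|A^{-t}n_x|$. For the section, pick $u\in\Sigma$ with $u\perp\Sigma$ replaced appropriately: the volume of a $(k-1)$-dimensional set in the hyperplane $n^\perp$ scales by $|\det A|\,|A^{-t}n|/|n|$. This is the standard codimension-one formula, derived from $\det A=\mathrm{vol}(A(\text{base}))\cdot\mathrm{height}$ with height $1/|A^{-t}n|$ for unit $n$. Both scale by the same factor because the section is taken in the hyperplane with the same normal $n_x$. If one prefers not to quote Busemann, this computation can be written out directly.

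\textbf{Remaining detail.} The map $x\mapsto\nu_x$ is defined only almost everywhere, and $A$ maps the set of non-smooth boundary points, which has $\mathcal H_{k-1}$-measure zero, to a set of measure zero, because $A$ is bi-Lipschitz.

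This argument does not use central symmetry, so it holds for every convex $B$ with $0$ in its interior. The restriction to origin-preserving maps is essential, since a translation changes which sections are central.
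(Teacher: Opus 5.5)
Your proof is correct and follows the paper's route: induction on dimension through the recursive Definition~\ref{def4.1}, with Busemann's Jacobian cancellation (Theorem~\ref{Busth}) handling the ratio of surface element to central-section volume. The paper states this only in one line. Your two additions supply exactly the details it leaves implicit: you strengthen the induction hypothesis to linear isomorphisms between different $k$-dimensional subspaces, which is needed because $A$ maps $\Sigma(\nu_x)$ onto a different hyperplane $A\Sigma(\nu_x)$, and you check the common scaling factor $|\det A|\,|A^{-t}n|$ directly.
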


The proof of Theorem \ref{thinlin} follows from Busemann's Theorem \ref{Busth} and an induction argument based on the recursive Definition \ref{def4.1}.

\section{Alexandrov-type problem for self-surface area}\label{sec6}
\subsection{Background and Motivation}
In classical convex geometry, the Minkowski and Alexandrov problems seek to reconstruct a convex body from its standard Euclidean surface area measure or integral Gauss curvature, respectively. In this framework, we introduce a novel variation: an Alexandrov-type problem formulated with respect to the \textit{self-surface measure} of a centrally symmetric convex set. Instead of relying on the Euclidean surface area, the perimeter and volume are defined intrinsically. For a centrally symmetric convex set $K \subset \mathbb{R}^n$, the self-perimeter is constructed recursively by integrating over the essential boundary, normalized by the self-volume of the $(n-1)$-dimensional cross-sections. The inverse problem therefore asks: Given a measure $\mu$ on $S^{n-1}$, does there exist a centrally symmetric convex set $B \subset \mathbb{R}^n$ whose self-surface measure is exactly $ \mu$?

\begin{defi}
 Let $\mu$ be a measure on $S^{n-1}$. For a centrally symmetric convex set $B$, let $T_B:S^{n-1}\rightarrow \partial B$ be given by $T_B(\theta)=r_B(\theta)\theta$ (see Definition \ref{def1}). Let $\nu_B(\theta)$ be the exterior normal to $\partial B$ at the point $x= T_B(\theta)$. Using the Jacobian of the radial map $d{\cal H}_{n-1}(x)= \frac{r_B^{n-1}(\theta)}{<\theta, \nu_B(\theta)>} d\theta$, we may convert (\ref{n=2}) to:
   $$ \omega^{(n)}_B:=n^{-1}\int_{S^{(n-1)}}\frac{\omega^{(n-1)}(B^{(n-1)}(\nu_B(\theta)))}{{\cal H}_{n-1}(B^{(n-1)}(\nu_B(\theta))))} \frac{r_B^{n-1}(\theta)}{<\theta, \nu_B(\theta)>} d\theta \ . $$
  
  {\bf Alexandrov-type problem:}
  Let $\mu$ be a positive measure on $S^{n-1}$. Find a centrally symmetric convex set $B$ in $\R^n$ whose self-surface measure is consistent with $\mu$, namely:
  \be\label{newf}\mu(d\theta)= \frac{\omega^{(n-1)}(B^{(n-1)}(\nu_B(\theta)))}{{\cal H}_{n-1}(B^{(n-1)}(\nu_B(\theta))))} \frac{r_B^{n-1}(\theta)}{<\theta, \nu_B(\theta)>} d\theta \ . \ee
\end{defi}
 
 \vskip .4in
 
Using (\ref{1}), we pose the Alexandrov-type problem in $\R^2$: Let $\mu$ be a $\pi$-periodic density on $S^1$. Is there a central convex set $B$ such that:
 \be\label{Apro} \mu(d\theta)= \frac{\sqrt{r^2_B(\theta)+ (r^{'}_B(\theta))^2}d\theta}{r_B\left( \theta + \arctan\left(\frac{r_B(\theta)}{r_B^{'}(\theta)}\right)\right)} \ \ \ee
 
Let $\mu:=e^{\epsilon(\phi+\phi_0)} d\theta$ where $\phi_0\in\R$ and $\int_0^{2\pi} \phi d\theta=0$. We may write $\phi = \phi^{(p)}+ \phi^{(u)}$ where:
  $$\phi^{(p)}= \sum_{k\in 4\mathbb{Z}-\{0\}} \phi_ke^{ik\theta} \ ; \ \ \phi^{(u)}= \sum_{k\not\in 4\mathbb{Z}} \phi_ke^{ik\theta} \ \ ; \phi_k=\phi_{-k}^* $$
\begin{lemma}
 There exists a unique $\phi_0\in\R$ such that:
 \be\label{phi0}
 \int_{\{s\in \mathbb{S}^1, \phi^{(p)}(s)+\phi_0>0\}} \sqrt{\phi^{(p)}(s)+\phi_0} \, ds - \int_{\{s\in \mathbb{S}^1, \phi^{(p)}(s)+\phi_0<0\}} \sqrt{|\phi^{(p)}(s)+\phi_0|} \, ds = 0.
 \ee
\end{lemma}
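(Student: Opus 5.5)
We treat the left-hand side of (\ref{phi0}) as a function of $\phi_0$ and show it has exactly one zero. Let $g(s) := \phi^{(p)}(s)$, a real continuous (indeed smooth, or at least $L^1$ with bounded values if we assume finitely many or summable Fourier modes) function on $\mathbb{S}^1$, and define the signed square root $\sigma(t) := \mathrm{sign}(t)\sqrt{|t|}$. Then (\ref{phi0}) reads $F(\phi_0)=0$, where
$$F(c) := \int_{\mathbb{S}^1} \sigma\big(g(s)+c\big)\, ds .$$
The plan is the intermediate value theorem combined with strict monotonicity of $F$.

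First, continuity. The map $\sigma$ is continuous on $\R$, and indeed H\"older-$1/2$: $|\sigma(a)-\sigma(b)|\le \sqrt{2}\,|a-b|^{1/2}$. Hence $|F(c)-F(c')|\le 2\pi\sqrt{2}\,|c-c'|^{1/2}$, so $F$ is continuous. This only requires $g$ to be measurable and finite a.e.

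Second, strict monotonicity. Since $\sigma$ is strictly increasing, for $c<c'$ we have $\sigma(g(s)+c)<\sigma(g(s)+c')$ for every $s$. Integrating over a set of positive measure gives $F(c)<F(c')$. This is the uniqueness part.

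Third, limits at $\pm\infty$. If $g$ is bounded (automatic when $\phi$ is continuous), then for $c>\|g\|_\infty$ the integrand is at least $\sqrt{c-\|g\|_\infty}>0$, so $F(c)\ge 2\pi\sqrt{c-\|g\|_\infty}\to\infty$. Symmetrically, $F(c)\to-\infty$ as $c\to-\infty$. If one only wants to assume $g\in L^1$, one can instead use $\sigma(t)\ge \sqrt{c}/2$ wherever $g>-c/2$, a set whose measure tends to $2\pi$, together with $|\sigma(t)|\le 1+|t|$ to control the remainder. The intermediate value theorem then yields a zero, and strict monotonicity makes it unique.

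The main obstacle is only bookkeeping about the regularity assumed on $\phi^{(p)}$: the statement is silent on it. I would state the standing assumption that $\phi$ is continuous, hence bounded, which makes the limit step immediate. The only other point to remark is that the set where $g+\phi_0=0$ contributes nothing, so the two integrals in (\ref{phi0}) are exactly $F(\phi_0)$. As a consistency check, if $\phi^{(p)}\equiv 0$ then $\phi_0=0$, which is consistent with $\int\phi=0$.
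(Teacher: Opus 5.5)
Your proof is correct and is the same argument as the paper's: the paper works with $\Gamma(\gamma)=F(-\gamma)$ and uses continuity, monotonicity, a sign change at $\pm\infty$, and the intermediate value theorem. You also supply details the paper leaves implicit, namely the H\"older-$1/2$ continuity bound, the explicit growth at $\pm\infty$, and the \emph{strict} monotonicity that uniqueness actually requires.
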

\begin{proof}
  Consider \[\Gamma(\gamma):= \int_{\{s\in \mathbb{S}^1 , \phi^{(p)}(s)-\gamma>0\}} \sqrt{\phi^{(p)}(s)-\gamma} \, ds - \int_{\{ s\in\mathbb{S}^1, \phi^{(p)}(s)-\gamma<0\}} \sqrt{-(\phi^{(p)}(s)-\gamma)} \, ds \]
  It follows that $\Gamma$ is continuous, monotone decreasing, $\Gamma$ is negative for $\gamma>>1$ and positive for $\gamma<<1$, so the lemma is obtained by the mean value theorem.
\end{proof}
\begin{theorem}\label{th6.1}
  If $\phi^{(p)}\not\equiv 0$ and $\phi_0$ satisfies (\ref{phi0}), then the asymptotic expansion of the Alexandrov problem around the unit circle $r_B\equiv 1$ is:
  $$ r_B^{'}= \pm \eps^{1/2}\operatorname{sgn}(\phi^{(p)}+\phi_0)\sqrt{\frac{2}{3}|\phi^{(p)}+\phi_0|} +O(\eps)$$
\end{theorem}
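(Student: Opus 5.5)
The plan is to write $r_B=e^{u}$ with $u$ small and $\pi$-periodic, expand the right-hand side of (\ref{Apro}) in powers of $u$, and solve order by order in $\eps$ using the ansatz $u=\eps^{1/2}v+\eps w+o(\eps)$. The half-integer power is forced by the structure of the linearized operator, as follows. Since $r/r'=1/u'$, I will use $\arctan(1/x)=\operatorname{sgn}(x)\frac{\pi}{2}-\arctan x$ together with the $\pi$-periodicity of $r_B$. This shows that the tangent direction at $\theta$ is, modulo $\pi$, $\theta+\frac{\pi}{2}-\arctan u'(\theta)$. The numerator is $r\sqrt{1+u'^2}$. Taking logarithms of (\ref{Apro}) with $\mu=e^{\eps(\phi+\phi_0)}$ then gives
$$\eps(\phi+\phi_0)=u(\theta)-u\!\left(\theta+\tfrac{\pi}{2}-\arctan u'(\theta)\right)+\tfrac12\log(1+u'^2).$$
Expanding to second order, this becomes
$$\eps(\phi+\phi_0)=Lu+u'(\theta+\tfrac{\pi}{2})\,u'(\theta)+\tfrac12 u'(\theta)^2+O(|u|^3),\qquad Lu(\theta):=u(\theta)-u(\theta+\tfrac{\pi}{2}).$$

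\textbf{Linear step.} On Fourier modes $e^{ik\theta}$, $L$ acts as multiplication by $1-i^k$. Hence $\ker L$ is spanned by the modes $k\in 4\mathbb Z$, i.e. the $\pi/2$-periodic functions, and $L$ is invertible on the complementary modes. The range of $L$ is therefore orthogonal to the $4\mathbb Z$-modes, which is exactly why the problem splits into $\phi^{(p)}$ and $\phi^{(u)}$. Matching the order $\eps^{1/2}$ forces $Lv=0$, so $v$ is $\pi/2$-periodic and consequently $v'(\theta+\pi/2)=v'(\theta)$.

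\textbf{Order $\eps$ and solvability.} At order $\eps$ the equation reads $Lw+\frac32 v'^2=\phi+\phi_0$, with the quadratic term evaluated as a $\pi/2$-periodic function. Projecting onto the $4\mathbb Z$-modes (including $k=0$) kills $Lw$. This leaves the nonlinear solvability condition $\frac32 Q(v')=\phi^{(p)}+\phi_0$, where $Q(v')$ is the quadratic term. The remaining modes are then solved uniquely by $w=L^{-1}\phi^{(u)}$ on the non-$4\mathbb Z$ modes, so $\phi^{(u)}$ only enters at order $\eps$, as claimed. Inverting the solvability condition pointwise gives $|v'|=\sqrt{\frac23|\phi^{(p)}+\phi_0|}$. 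For the sign I would take $\operatorname{sgn}v'=\pm\operatorname{sgn}(\phi^{(p)}+\phi_0)$; the global $\pm$ reflects the symmetry $\theta\mapsto-\theta$, which reverses orientation.

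\textbf{Closing condition.} For $v$ to be a genuine periodic function we need $\int_{\mathbb S^1}v'\,d\theta=0$. With the formula above this is precisely condition (\ref{phi0}), and the Lemma supplies the unique $\phi_0$ satisfying it. The hypothesis $\phi^{(p)}\not\equiv0$ ensures the leading term is nontrivial, so the scaling is genuinely $\eps^{1/2}$ rather than $\eps$. Finally, $r_B'=r_B u'=\eps^{1/2}v'+O(\eps)$ yields the stated formula.

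\textbf{Main obstacle.} I expect the hardest step to be justifying the sign structure, i.e. why the effective quadratic term behaves like $v'|v'|$, a signed square, rather than $v'^2$. A naive expansion gives $\frac32v'^2\ge0$, which would force $\phi^{(p)}+\phi_0\ge0$ and clash with (\ref{phi0}). I would first track the branch of $\arctan(r/r')$ together with the orientation convention of Remark \ref{rem3.1} at the zeros of $v'$: across each such zero the tangent direction flips by $\pi$, and the sign of the local contribution flips with it. If that bookkeeping does not produce the signed square, the fallback is to restrict to measures with $\phi^{(p)}+\phi_0\ge0$ and read the $\operatorname{sgn}$ as a choice of branch of the square root, with (\ref{phi0}) as the closing condition. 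A secondary issue is that the expansion is only formal: $v'$ vanishes like a square root at the zeros of $\phi^{(p)}+\phi_0$, so the remainder is controlled in a weak (integrated) sense only. I would present the result as an asymptotic expansion rather than attempt an implicit-function-theorem justification.
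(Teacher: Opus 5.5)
Your route is the same as the paper's. You take logarithms, expand to second order to get $Lu+u'(\theta)u'(\theta+\frac{\pi}{2})+\frac12u'^2$, use the $\epsilon^{1/2}$ ansatz with $v\in\ker L$, and project the order-$\epsilon$ equation onto the $4\mathbb{Z}$ modes to obtain $\frac32v'^2=\phi^{(p)}+\phi_0$. The paper then passes directly to $\zeta_1'=\pm\operatorname{sgn}(\phi^{(p)}+\phi_0)\sqrt{\frac23|\phi^{(p)}+\phi_0|}$, which is your \emph{inverting pointwise} step. That step is a genuine gap: the obstacle you flag at the end is real and fatal.

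Under the theorem's own hypotheses, $\phi^{(p)}+\phi_0$ is negative on a set of positive measure. If it were $\ge0$ a.e., then (\ref{phi0}) would read $\int\sqrt{\phi^{(p)}+\phi_0}\,ds=0$, forcing $\phi^{(p)}\equiv-\phi_0$. Since $\phi^{(p)}$ has no zero mode, that gives $\phi^{(p)}\equiv0$, contradicting the hypothesis. On that negative set, $\frac32v'^2=\phi^{(p)}+\phi_0$ has no real solution, and the signed root produces $\frac32v'^2=|\phi^{(p)}+\phi_0|$ instead.

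A concrete check: take $\phi=\cos4\theta$. The symmetry $\theta\mapsto\theta+\frac{\pi}{4}$ makes $\phi_0=0$ the unique solution of (\ref{phi0}). But the $k=0$ mode of the order-$\epsilon$ equation requires $\phi_0=\frac{1}{2\pi}\int_0^{2\pi}\frac32v'^2\,d\theta$. For the claimed $v'$ this equals $\frac{1}{2\pi}\int_0^{2\pi}|\cos4\theta|\,d\theta=\frac{2}{\pi}\neq0$. So the claimed leading term does not reproduce $\mu$ at order $\epsilon$.

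Neither of your remedies repairs this. Branch tracking cannot create a signed square. You already used $r_B(\theta+\pi)=r_B(\theta)$ to show that the argument of $r_B$ is $\theta+\frac{\pi}{2}-\arctan u'(\theta)$ for either sign of $u'$. A flip of the tangent by $\pi$ therefore does not change $r_B$ for a centrally symmetric body, and Remark \ref{rem3.1} plays no role. The integrand in (\ref{Apro}) is positive, and the quadratic term you derived, $\frac32v'^2\ge0$, is correct.

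Your fallback, with (\ref{phi0}) kept as the closing condition, is vacuous by the argument above. What your ansatz actually yields is the necessary condition $\phi^{(p)}+\phi_0\ge0$. In particular $\phi_0=\frac{3}{4\pi}\int_0^{2\pi}v'^2\,d\theta\ge0$, so $4$-fold perturbations of the disc raise the self-perimeter. The leading term is then $v'=s\sqrt{\frac23(\phi^{(p)}+\phi_0)}$, where $s$ is a $\frac{\pi}{2}$-periodic sign function satisfying $\int_0^{\pi/2}s\sqrt{\phi^{(p)}+\phi_0}\,d\theta=0$, a condition different from (\ref{phi0}).

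So the statement as formulated cannot be proved this way, because it is inconsistent with the very expansion used to derive it. Correcting the inversion step turns your argument, and the paper's, into a derivation of this modified statement instead.
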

  
\begin{proof} 
Let $\zeta =\ln (r_B)$ where $r_B$ is a solution. Taking the log of both sides we get:
 \begin{equation}
 \phi(\theta) +\phi_0= \zeta(\theta) + \frac{1}{2}\ln(1 + (\zeta'(\theta))^2) - \zeta\left(\theta + \arctan\left(\frac{1}{\zeta'(\theta)}\right)\right)
 \end{equation}
 
 We expand the argument for $\zeta=o(1)$ using $\arctan(1/\zeta') = \text{arccot}(\zeta') \approx \pi/2 - \zeta'$.
 \begin{equation}\label{27}
 \epsilon( \phi(\theta) +\phi_0)= \zeta(\theta) - \zeta(\theta + \pi/2) + \zeta'(\theta)\zeta'(\theta + \pi/2) + \frac{1}{2}(\zeta'(\theta))^2 + \mathcal{O}(\zeta^3)
 \end{equation}
 
 Let $L(\zeta)_{(\theta)}:= \zeta(\theta) -\zeta(\theta+\pi/2)$. The kernel of $L$ is the set of $\pi/2$-periodic functions. Its spectrum is $\lambda_k=(1-e^{ik\pi/2})$ and the eigenfunctions are just $e^{ik\theta}$, $k\in \mathbb{Z}$. To balance the equation, we perform an asymptotic expansion $\zeta = \epsilon^{1/2} \zeta_1 + \epsilon \zeta_2 + o(\epsilon)$. At $\mathcal{O}(\epsilon^{1/2})$, $L \zeta_1=0$. This implies $\zeta_1$ is purely $\pi/2$-periodic, containing only harmonics where $k \in 4\mathbb{Z}$. It implies $\zeta_1^{'}(\theta+\pi/2)=\zeta_1^{'}(\theta)$ so by (\ref{27}):
 \be\label{eqh} L \zeta_2=\phi+\phi_0 -\frac{3}{2} (\zeta_1^{'})^2 \ . \ee

 The right side of (\ref{eqh}) should contain only the harmonics with $k\not\in 4\mathbb{Z}$. Thus:
 $$ (\zeta_1^{'})^2= \frac{2}{3} (\phi^{(p)}+\phi_0)\ \rightarrow \zeta_1^{'}=\pm \operatorname{sgn} (\phi^{(p)}+\phi_0) \sqrt{ \frac{2}{3} |\phi^{(p)}+\phi_0|} $$
 Since $\zeta_1$ is $2\pi$-periodic it implies $\int_0^{2\pi} \zeta_1^{'}(\theta)d\theta=0$. This is satisfied iff $\phi_0$ satisfies (\ref{phi0}). 
\end{proof} 
\begin{remark} 
By (\ref{eqh}) we also get:
 $$ \zeta_2(\theta)=\sum_{k\not\in 4\mathbb{Z}} \frac{\phi_k e^{ik\theta}}{1-e^{ik\pi/2}} \mod \text{Ker}(L)$$
\end{remark} 
 
\section{Conclusion} 

In this paper, we introduced a natural, recursive definition for volume and surface area in $n$-dimensional normed spaces which preserves   the Euclidean ratio $P_B(B)/\omega^{(n)}_B(B)=n$ of perimeter to volume of the unit ball $B$.   We demonstrated that this construction is invariant under origin-preserving affine transformations, satisfying fundamental geometric requirements for measures in Minkowski spaces \cite{alvarez1998volumes, thompson1996minkowski}. 

We extend this definition also for non centrally symmetric convex sets $B\subset\R^n$ and prove that the self volume of a Cartesian products of sets is the product of their self-volume. As a result we verified  that the function $p \mapsto V(B-p)$  is convex for a class of convex bodies defined by a Cartesian products of simplexes, 2D convex sets and 1-D intervals. In particular we obtain the existence of an optimal center for such sets for which the self-volume is minimal.

Finally, we explore an Alexandrov-type problem for the associated self-surface measure \cite{schneider2014convex}. Through a perturbative analysis around the Euclidean disc, we identified a rigidity result for the surface measure $\mu$, revealing a significant obstruction to solvability: a necessary condition for the existence of a solution is the absence of certain harmonic components in the density perturbation. These findings open new avenues for research, particularly regarding the global solvability of the inverse problem and the extremal values of the self-volume in higher dimensions.

\section*{Acknowledgments}
The author wishes to acknowledge Jenni (also known as Gemini) for her invaluable assistance in editing, proofreading, and structuring this manuscript.

\end{document}